\newtheorem{theorem}{Theorem}[section]
\newtheorem{thm}[theorem]{Theorem}
\newtheorem{example}[theorem]{Example}
\newtheorem{lem}[theorem]{Lemma}
\newtheorem{cor}[theorem]{Corollary}
\newtheorem{prop}[theorem]{Proposition}
\newtheorem{conj}[theorem]{Conjecture}
\newcommand\stackplus[1]{\makebox[0ex][l]{$+$} \raisebox{-.75ex}{\makebox[2ex]{$_{#1}$}}}
\newcommand{\SG}{\mathcal{S}(G)}
\newcommand{\SGp}{\mathcal{S}_+(G)}
\newcommand{\Mp}{\operatorname{M}_+}
\newcommand{\nul}{\operatorname{null}}
\newcommand{\M}{\operatorname{M}}
\DeclareMathOperator{\T}{T}
\DeclareMathOperator{\TT}{\mathcal{T}}
\DeclareMathOperator{\PP}{\mathcal{P}}
\begin{document}
\title{On the Relationships between Zero Forcing Numbers and Certain Graph Coverings}

%submitted to Special Matrices Nov. 25

 \author{Fatemeh Alinaghipour Taklimi}
 \ead{alinaghf@uregina.ca}

 \author{Shaun Fallat\corref{cor2}\fnref{fn1}}
 \ead{shaun.fallat@uregina.ca}
 \fntext[fn1]{Research supported by an  NSERC Discovery Research Grant.}
 
 \author{Karen Meagher\fnref{fn2}}
 \ead{karen.meagher@uregina.ca}
 \cortext[cor2]{Corresponding author}
 \fntext[fn2]{Research supported by an NSERC Discovery Research Grant.}

 \address{Department of Mathematics and Statistics,\\
 University of Regina, 3737 Wascana Parkway, S4S 0A4 Regina SK, Canada}

 \begin{abstract}
   The zero forcing number and the positive zero forcing number of a
   graph are two graph parameters that arise from two types of graph
   colourings. The zero forcing number is an upper bound on the minimum number
   of induced paths in the graph, while the positive zero forcing
   number is an upper bound on the minimum number of induced trees in the
   graph. We show that for a block-cycle graph the zero forcing
   number equals the path cover number. We also give a purely graph
   theoretical proof that the positive zero forcing number of any
   outerplanar graphs equals the tree cover number of the graph. These
   ideas are then extended to the setting of $k$-trees, where the
   relationship between the positive zero forcing number and the tree
   cover number becomes more complex.
    \end{abstract}

 \begin{keyword}
 Zero forcing number, positive zero forcing number, path cover number, tree cover number  
  \end{keyword}

%\subjclass[2000]{05C35,05C69}
\maketitle

\section{Introduction}

The zero forcing number of a graph was introduced in \cite{MR2388646}
and the related terminology was extended in \cite{MR2645093}. Since
then this parameter has been considered by a wealth of researchers, see, for example,
\cite{estrella, param, EHHLR, Yeh, MR2917419} for additional sources
on this topic.  Independently, physicists have studied this parameter,
referring to it as the graph infection number, in conjunction with
control of quantum systems \cite{burgarth2007full,
  burgarth2009indirect, ZFQC, Sev}. It also arises in computer science
in the context of fast-mixed searching~\cite{Yang}.

In general, when determining the zero forcing number of a graph we
start with a set of initial vertices of the graph (which we say are
coloured black, while all other vertices are white). Then, using a
particular colour change rule applied to these vertices, we change the
colour of white vertices in the graph to black.  The repeated
application of this colour change rule partitions the graph into
disjoint induced paths and each of the initial vertices is an end
point of one of these paths. The challenge is to determine the
smallest set of initial vertices so that by repeatedly applying the
colour change rule will change the colour of every white vertex of the
graph to black. Recently a refinement of the colour change rule was introduced
(called the positive zero forcing colour change rule) using this rule,
the positive semi-definite zero forcing number was defined (see, for
example, \cite{MR2645093, ekstrand2011positive, ekstrand2011note}).
When the positive zero forcing colour change rule is applied to a set
of initial vertices of a graph, the vertices are partitioned into
disjoint induced trees, rather than paths. These parameters are both
remarkable since they are graph parameters that provide an upper bound
on the algebraic parameters of maximum nullity of both symmetric and positive
semi-definite matrices associated with a graph (see \cite{MR2645093, param}). 
That is, for a given graph $G=(V,E)$, we define
\[ \SG = \{ A=[a_{ij}] : A=A^{T}, \; {\rm for} \; i \neq j, a_{ij} \neq 0 \leftrightarrow \{i,j\} \in E\},\]
and let $\SGp$ denote the subset of  positive semi-definite matrices in $\SG$. 
The {\em maximum nullity} of $G$ is  
$\M(G) = \max\{\nul (B) : B \in \SG \}$ and define
$ \Mp(G) = \max \{  \nul (B) : B \in \SGp \}, $ 
is called the {\em maximum positive semi-definite nullity of $G$}. (Here 
$\nul (B)$ denotes the nullity of the matrix $B$.)

In the next section, we define the first colour change
rule, along with stating the definition of a zero forcing set and the zero
forcing number of a graph. The relationship between these sets and
induced paths in the graph will become clear with these definitions.  In
Sections~\ref{Graphs with Z(G)=P(G)} and \ref{doublepaths} we prove
the equality between the zero forcing number and the path cover number
for the following three families of graphs: block-cycle graphs,
double paths, and graphs that we call series of double paths.  In
Section 5 we define the positive semi-definite colour
change rule along with the positive semi-definite forcing number and
forcing trees.  The positive semi-definite forcing number for
graphs that are formed by the graph operation called the vertex
sum are considered in Section~\ref{vertex_sum_of_two_graphs}.  In
Sections~\ref{doubletrees} and \ref{outerplanar} we establish results
similar to those in Sections~\ref{Graphs with Z(G)=P(G)} and
\ref{doublepaths} for the positive semi-definite forcing number.
Specifically, we show that the parameters tree cover number and
positive zero forcing number agree on double trees and outerplanar
graphs. Finally, in Section~\ref{k_trees} we give some families of
$k$-trees in which we can track both the tree cover number and the positive
zero forcing number.

\section{Zero forcing sets}
\label{zeroforcingset}

Let $G$ be a graph in which every vertex is initially coloured either
black or white. If $u$ is a black vertex of $G$ and $u$ has exactly
one white neighbour, say $v$, then we change the colour of $v$ to
black; this rule is called the \textsl{colour change rule}. In this
case we say ``$u$ forces $v$'' which is denoted by {$u\rightarrow
  v$}. The procedure of colouring a graph using the colour rule is
called a \textsl{zero forcing process} or simply a {\em forcing
process}. Given an initial colouring of $G$, in which a set of the
vertices is black and all other vertices are white, the
\textsl{derived set} is the set of all black vertices resulting from
repeatedly applying the colour-change rule until no more changes are
possible. If the derived set for a given initial subset of black
vertices is the entire vertex set of the graph, then the set of
initial black vertices is called a \textsl{zero forcing set}. The
\textsl{zero forcing number} of a graph $G$ is the size of the
smallest zero forcing set of $G$; it is denoted by $Z(G)$. A zero
forcing process is called \textsl{minimal} if the initial set of black
vertices is a zero forcing set of the smallest possible size.

For any non-empty graph $G$
\[
1\leq Z(G) \leq |V(G)|-1.
\]
The lower bound holds with equality if and only if $G$ is a path and
the upper bound holds with equality if and only if $G$ is a complete
graph. In fact, the parameter $Z(G)$ is also interesting since $\M(G) \leq Z(G)$
\cite{MR2645093}.

If $Z$ is a zero forcing set of a graph $G$, then it is possible to
produce a list of the forces in the order in which they are performed
in the zero forcing process. This list is called the
\textsl{chronological list of forces}.  A \textsl{forcing chain} is a
sequence of vertices $(v_1,v_2,\ldots,v_k)$ such that $v_i\rightarrow
v_{i+1}$, for $i=1,\ldots,k-1$ in the forcing process. The forcing
chains are not unique for a zero forcing set, as different zero forcing
processes can produce distinct sets of forcing chains.

In every step of a forcing process, each vertex can force at most one
other vertex; conversely every vertex not in the zero forcing set is
forced by exactly one vertex.  Thus the maximal forcing chains
partition the vertices of a graph into disjoint induced paths. The
number of these paths is equal to the size of the zero forcing set and the elements of
the zero forcing set are the initial vertices of the forcing chains and hence end-points of
these paths (see \cite[Proposition 2.10]{MR2645093} for more details).

A \textsl{path covering} of a graph is a family of induced disjoint paths in
the graph that cover (or include) all vertices of the graph. The minimum number of
such paths that cover the vertices of a graph $G$ is the \textsl{path cover
number of $G$} and is denoted by $P(G)$.  Since the forcing chains form
a set of covering paths we have the following basic result. 

\begin{prop}\label{P(G)<=Z(G)}
Let $G$ be a graph, then $P(G)\leq Z(G)$.\qed
\end{prop}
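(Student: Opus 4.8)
The plan is to read off the result directly from the structural facts about forcing chains established in the paragraphs immediately preceding the statement, so the proof is essentially a packaging argument rather than anything requiring new ideas.

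First I would fix a minimum zero forcing set $Z$ of $G$, so that $|Z| = Z(G)$, and carry out a zero forcing process starting from $Z$, recording a chronological list of forces. From this list I extract the set of maximal forcing chains. The key observations, already noted in the text above, are: (i) in the forcing process every vertex forces at most one other vertex, so no two maximal chains share an interior ``branch''; (ii) every vertex not in $Z$ is forced by exactly one vertex, so the chains are pairwise disjoint and together exhaust $V(G)$; and (iii) each maximal chain, as a subgraph of $G$, is an induced path (this is the content of \cite[Proposition 2.10]{MR2645093}, quoted above). Consequently the maximal forcing chains form a path covering of $G$.

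Next I would count these chains: each maximal forcing chain begins at a vertex that was never forced, i.e., at a vertex of $Z$, and conversely each vertex of $Z$ is the initial vertex of exactly one maximal chain. Hence the number of chains in the covering equals $|Z| = Z(G)$. Since $P(G)$ is by definition the minimum size of a path covering, we conclude $P(G) \le Z(G)$.

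The only point that needs any care — and the place where one is really leaning on external results — is step (iii), the claim that a maximal forcing chain induces a path with no chords; everything else is bookkeeping about the chronological list of forces. Since that fact is already stated (with reference) in the excerpt, there is no genuine obstacle here, and the argument is complete in a few lines.
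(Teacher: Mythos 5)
Your argument is correct and is essentially identical to the paper's: the paper proves this proposition by the same observation that the maximal forcing chains of a minimum zero forcing set partition $V(G)$ into induced paths, one per vertex of the forcing set (citing \cite[Proposition 2.10]{MR2645093}), so the chains form a path covering of size $Z(G)$ and hence $P(G)\leq Z(G)$. Nothing further is needed.
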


This inequality can be strict, in fact, complete graphs are
examples for which the difference between these two parameters can be
arbitrarily large, since if $n >3$ we have
\[
P(K_n) = \left \lceil \frac{n}{2} \right\rceil < n - 1 = Z(K_n).
\]
Conversely, there are many examples where this inequality holds with
equality, for example a path on $n$ vertices. In the next section we
consider families of graphs for which the zero forcing number equals
the path cover number.

%%%%%%%%%%%%%%%%%%%%%%%%%%%%%%%%%%%%%%%%%%%%%%%%%

\section{Block-cycle graphs}\label{Graphs with Z(G)=P(G)}

The most famous family of graphs for which the path cover number
agrees the zero forcing number is trees (see \cite[Proposition
4.2]{MR2388646}).  In this section, we establish this equality for the
block-cycle graphs.  We refer the readers to \cite{BFH} where some initial work 
comparing M and $P$ appeared
for block-cycle graphs.

A graph is called \textsl{non-separable} if it is connected and has no
cut-vertices.  A \textsl{block} of a graph is a maximal non-separable induced
subgraph. A \textsl{block-cycle} graph is a graph in which every block is
either an edge or a cycle. A block-cycle
graph with only one cycle is a \textsl{unicyclic} graph. Further, according to
the definition, the only block-cycle graphs with no cut vertex is
either a cycle or an edge.

It is not hard to see that in a block-cycle graph each pair of cycles
can intersect in at most one vertex, since otherwise there will be a
block in the graph which is neither a cycle nor an edge.  Thus two
blocks are said to be \textsl{adjacent} if they have exactly one
vertex in common.  A block in a block-cycle graph is a \textsl{pendant
  block} if it shares only one of its vertices with other blocks
of the graph. The next result demonstrates that just as a tree must have at
least two pendant vertices, a block-cycle graph must have at least two
pendant blocks.

\begin{lem}\label{pendant block}
 Any block-cycle graph has at least two pendant blocks.
\end{lem}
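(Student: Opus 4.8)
The plan is to mimic the classical proof that a tree has at least two leaves, but working with the \emph{block-cut tree} of the graph rather than the graph itself. Given a connected block-cycle graph $G$, form the bipartite graph $\mathcal{B}(G)$ whose vertices are the blocks of $G$ together with the cut-vertices of $G$, with a block $B$ joined to a cut-vertex $c$ precisely when $c \in V(B)$. A standard fact is that $\mathcal{B}(G)$ is a tree (the block-cut tree), and the leaves of $\mathcal{B}(G)$ that correspond to blocks are exactly the pendant blocks of $G$: a block $B$ is pendant iff it contains only one cut-vertex of $G$, i.e.\ iff its degree in $\mathcal{B}(G)$ is one.

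First I would dispose of the degenerate cases: if $G$ has no cut-vertex at all, then by the remark preceding the lemma $G$ is a single cycle or a single edge, and in that situation the statement should be read as $G$ being its own (only) block; one can either treat this as a trivial base case or adopt the convention used in the paper. Assuming then that $G$ has at least one cut-vertex, $\mathcal{B}(G)$ is a tree with at least three vertices (at least one cut-vertex node, and the two blocks it separates), hence has at least two leaves. Next I would argue that no leaf of $\mathcal{B}(G)$ can be a cut-vertex node: a cut-vertex $c$ lies in at least two blocks of $G$ by definition of cut-vertex, so its degree in $\mathcal{B}(G)$ is at least two. Therefore both (indeed, all) leaves of $\mathcal{B}(G)$ are block nodes, giving at least two pendant blocks of $G$.

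The main obstacle — really the only place requiring care — is establishing cleanly that $\mathcal{B}(G)$ is a tree, or rather, since that is a textbook result, deciding how much of it to reprove in-line versus cite. Connectedness of $\mathcal{B}(G)$ follows from connectedness of $G$; acyclicity follows because a cycle in $\mathcal{B}(G)$ would force two blocks to share two distinct cut-vertices, producing a path between them through each block and hence a cycle of $G$ straddling both blocks, which would then not be contained in any single block and would contradict maximality of blocks (this is where the block-cycle structure is not even needed — the fact is general). I would phrase this as a short self-contained paragraph: pick a shortest cycle in $\mathcal{B}(G)$ if one existed, extract the two consecutive block nodes and the two cut-vertices joining them, and derive a $2$-connected subgraph properly containing a block, contradiction. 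Once $\mathcal{B}(G)$ is known to be a tree, the leaf count and the degree bound on cut-vertex nodes finish the argument immediately.

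An alternative, more elementary route that avoids naming the block-cut tree explicitly: induct on the number of blocks. If $G$ has one block we are done. Otherwise pick any block $B$, delete from $G$ all vertices of $B$ except one chosen cut-vertex it contains; the components left, reattached appropriately, are smaller block-cycle graphs, each of which supplies a pendant block of $G$ by induction, and one checks that at least two distinct such pendant blocks of the pieces remain pendant in $G$. I would present the block-cut tree proof as the primary argument since it is shorter and more transparent, mentioning the inductive alternative only if space permits.
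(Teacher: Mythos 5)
Your proof is correct, but it takes a genuinely different route from the paper. You pass through the block--cut tree $\mathcal{B}(G)$: since cut-vertex nodes have degree at least two, every leaf of this tree is a block node, and a tree on at least two vertices has at least two leaves, which are exactly the pendant blocks. The paper instead builds the ``block adjacency graph'' $G'$ (one vertex per block, two vertices adjacent when the blocks share a vertex, with no cut-vertex nodes), takes a \emph{longest induced path} in $G'$, and argues that its two endpoints must correspond to pendant blocks, deriving a contradiction from the block-cycle hypothesis (a putative non-separable subgraph that is neither an edge nor a cycle). Your argument buys more: it never uses the block-cycle structure, so it proves the statement for every connected graph with at least two blocks, and the only nontrivial ingredient (that $\mathcal{B}(G)$ is a tree) is a citable textbook fact, e.g.\ from Diestel, which the paper already references. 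Two small points to tidy up if you write it out: your acyclicity sketch (``two consecutive block nodes and the two cut-vertices joining them'') only literally describes a $4$-cycle of $\mathcal{B}(G)$; for longer cycles you should concatenate paths through the successive blocks between consecutive cut-vertices to get a cycle of $G$ not contained in a single block, or simply cite the block-cut tree result outright. And, as you note, the single-block case (no cut-vertex, $G$ a cycle or an edge) must be excluded or handled by convention --- the paper's own induction in Theorem~\ref{For block-cycle Z(G)=P(G)} treats that case separately, so your handling is consistent with the paper's usage.
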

\begin{proof}
  Let $G$ be a block-cycle graph with exactly $n$ blocks. To prove
  this theorem, we will construct a graph with $n$ vertices that is
  a minor of $G$. We will show that the end-points of the longest
  induced path in $G'$ are associated to the pendant blocks in the
  original graph.

  Let $B_1,\ldots, B_n$ be the blocks in $G$. Note that $B_i$ is
  either an edge or a cycle, for any $1\leq i\leq N$.

  The vertices of $G'$ are $\{v_1,v_2,\dots,v_n\}$ and the vertex
  $v_i$ is associated to the block $B_i$. Vertices $v_i$ and $v_j$ are
  adjacent in $G'$ if and only if the blocks $B_i$ and $B_j$ share a
  vertex.

  Let $P=\{u_1,u_2,\ldots, u_k\}$ be the longest induced path in $G'$
  and assume that each $u_i$ corresponds to the block $B_i$. We will
  show that both $B_1$ and $B_k$ are pendant blocks.

  If $B_1$ is not a pendant block, then there is another block, $B$,
  that shares a vertex with $B_1$ different from the vertex that $B_1$
  shares with $B_2$. Since $P$ is a longest induced path, the block
  $B$ must also share a vertex with some $B_i$ with $i
  =2,\dots,k$. However, in this case the blocks $B, B_1,B_2, \dots, B_i$ form a
  non-separable subgraph that is neither a cycle nor an edge.
%Note that not all of $B, B_1,b_2, \dots,B_i$ can be edges, since they would from a cycle in $G$.
\end{proof}

The following lemma is straightforward to prove.
\begin{lem}\label{P(G-B), G is a block-cycle}
If $B$ is a pendant block in a block-cycle graph, then 
\[
P(G\backslash B)\leq P(G).\qed
\]
\end{lem}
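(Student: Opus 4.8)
The plan is to take a minimum path covering $\PP$ of $G$ (so that $|\PP|=P(G)$) and transform it, block by block, into a path covering of $G\backslash B$ that uses no more paths. Let $v$ be the unique vertex of $B$ that lies in another block of $G$; then $G\backslash B$ is obtained from $G$ by deleting the vertices of $B$ other than $v$ (the argument below adapts at once if one uses the convention that $v$ is deleted as well). Since $B$ is a block of a block-cycle graph, it is either an edge or a cycle, and I would treat these two cases separately.

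\emph{The edge case.} If $B=\{v,w\}$, then $w$ is a pendant vertex of $G$, so in whichever member of $\PP$ contains $w$, the vertex $w$ must be an end-point. Deleting $w$ from that path leaves an induced path of $G\backslash B$ (possibly the empty path), while every other member of $\PP$ is already an induced path of $G\backslash B$. This yields a path covering of $G\backslash B$ with at most $P(G)$ paths.

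\emph{The cycle case.} Here I would use two structural facts. First, the intersection of any induced path of $G$ with the cycle $C$ is a single arc of $C$: since each vertex of $C$ other than $v$ has all of its neighbours on $C$, an induced path that leaves $C$ cannot ``re-enter'' it. Consequently the members of $\PP$ meeting $C$ partition $V(C)$ into arcs, and because $C$ is not itself an induced path, at least two arcs occur, so at least one arc avoids $v$. Second, any arc that avoids $v$ is a \emph{whole} member of $\PP$ contained in $V(C)\setminus\{v\}$ (again because those vertices have no neighbours off $C$, such a member cannot extend beyond the arc). I would then discard those members — there is at least one of them — keep every member of $\PP$ disjoint from $C$ unchanged, and for the single member $Q$ of $\PP$ containing $v$, delete from $Q$ all cycle vertices other than $v$; since $Q\cap C$ is an arc through $v$, what remains is a single induced path of $G\backslash B$ (in the worst case the one-vertex path on $v$). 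The resulting family covers $G\backslash B$ and uses at least one path fewer than $\PP$, so $P(G\backslash B)\le P(G)$.

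The verification is mostly bookkeeping, and the one point that needs care is the member of $\PP$ through the cut-vertex $v$: I must check that stripping off its portion inside the pendant cycle (or, under the other convention, stripping off $v$ as well, which can split that path into at most two pieces) never costs more paths than are recovered by discarding the arc-paths lying strictly inside $B$. Since a pendant cycle always contributes at least one such arc-path, this accounting closes; for a pendant edge the same bound is immediate, as nothing new is created. Thus in either case $P(G\backslash B)\le P(G)$.
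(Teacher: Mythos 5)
Your proposal is correct. Note that the paper offers no argument for this lemma at all---it is stated with the remark that it ``is straightforward to prove'' and closed with a \qed---so there is no proof in the paper to compare yours against; your write-up simply supplies the omitted justification. The two-case split (pendant edge versus pendant cycle) is the natural one, and the key structural observations are sound: every vertex of a pendant cycle $C$ other than the cut-vertex $v$ has all its neighbours on $C$, so the trace of any induced path on $C$ occupies consecutive positions of that path and, since $C$ is chordless, forms an arc; no single arc can exhaust $V(C)$ because its two ends would be adjacent; hence at least one covering path lies wholly inside $C\setminus\{v\}$ and can be discarded, which pays for trimming the path through $v$ (and your accounting in fact yields the sharper bound $P(G\backslash B)\le P(G)-1$ in the cycle case, matching the inequality $P(G')+1\le P(G)$ used later in Theorem~\ref{For block-cycle Z(G)=P(G)}). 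One point worth stating explicitly is the convention for $G\backslash B$: in the proof of Theorem~\ref{For block-cycle Z(G)=P(G)} the lemma is applied to the graph obtained by deleting the vertices of $B$ \emph{except} the cut-vertex $u$, which is exactly the convention your main argument uses, and your remark on the other convention (where deleting $v$ may split the path through it into two pieces, again compensated by the discarded arc) covers the literal reading as well.
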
 

\begin{thm}\label{For block-cycle Z(G)=P(G)}
Let $G$ be a block-cycle graph. Then 
\[
Z(G)=P(G).
\]
Furthermore, the paths in any minimal path covering of $G$ are
precisely the forcing chains in a minimal zero forcing process
initiated by an appropriate selection of the end-points of the paths in this
collection.
\end{thm}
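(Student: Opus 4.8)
The plan is to prove this by induction on the number of blocks, using the two lemmas just established. Since $P(G) \leq Z(G)$ always holds by Proposition~\ref{P(G)<=Z(G)}, it suffices to exhibit a zero forcing set of size $P(G)$, and in fact to produce it from a minimal path covering in a way that makes the forcing chains coincide with the covering paths. The base cases are a single edge and a single cycle: for an edge, $P = Z = 1$, and for a cycle $C_n$ we have $P(C_n) = Z(C_n) = 2$, with the obvious covering by two paths (or one path for small $n$) realized by forcing from the two designated endpoints.

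\medskip

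For the inductive step, let $G$ be a block-cycle graph with $n \geq 2$ blocks and pick, via Lemma~\ref{pendant block}, a pendant block $B$ attached to the rest of the graph at a single cut-vertex $v$. Write $G' = G \backslash B$ (removing the vertices of $B$ other than $v$, so that $v$ remains). First I would argue that $P(G) = P(G\backslash B)$ or $P(G\backslash B)+1$ depending on the structure of $B$ and how a minimal path cover of $G'$ behaves at $v$ — the key case analysis being whether $B$ is an edge or a cycle, and whether $v$ is an interior vertex or an endpoint of the path covering $v$ in a minimal cover of $G'$. By induction, $Z(G\backslash B) = P(G\backslash B)$ with a minimal zero forcing set whose forcing chains are a given minimal path cover of $G'$. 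I would then show how to modify this zero forcing set near $v$: either extend the chain through $v$ into $B$ (when $B$ is an edge attached at an endpoint of that chain, costing nothing), or add one new initial black vertex inside $B$ and split off a new chain covering $B \setminus v$ (when $B$ is a cycle, or when $v$ sits in the interior of its chain). In each case one checks that the modified coloring still forces all of $G$: the part inside $B$ gets forced exactly as in the base case once $v$ is black (or forces into $v$ if needed), and nothing about the forcing on $G'$ is disrupted.

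\medskip

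Conversely, and this is where the "furthermore" clause needs care, I would take an arbitrary minimal path covering $\mathcal{P}$ of $G$ and show directly that one can orient and order its paths so that they arise as forcing chains. The idea is to look at how $\mathcal{P}$ restricts to the pendant block $B$: the paths of $\mathcal{P}$ meeting $B$ cover $B$ with induced paths, and because $B$ is an edge or a cycle there are only a few configurations. One removes the portion inside $B \setminus v$, obtaining (after possibly merging or trimming at $v$) a minimal path covering of $G'$, applies induction to get a chronological list of forces there, and then appends the forces inside $B$ at the end. Choosing the endpoints of the paths of $\mathcal{P}$ as the initial black set, one verifies this list is a valid forcing process for $G$.

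\medskip

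The main obstacle is the bookkeeping at the cut-vertex $v$: a path of the covering that passes through $v$ may enter from $G'$ and leave into $B$, so when we delete $B$ that path gets truncated at $v$, and we must ensure the truncated family is still a \emph{minimal} covering of $G'$ (so that induction applies) and that $v$ ends up positioned correctly — as an endpoint of its chain in $G'$ exactly when we need it to be free to force into $B$, or already black in time to let $B$'s internal chain start. Handling a cycle pendant block is the trickiest sub-case, since a cycle requires two path-endpoints, and one must track whether $v$ is one of them or whether the cycle contributes a fresh path entirely inside $B$; getting the count $P(G) - P(G\backslash B) \in \{0,1\}$ to match the change in zero forcing number in every configuration is the crux of the argument.
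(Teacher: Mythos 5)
For the equality $Z(G)=P(G)$ your plan is essentially the paper's proof: induction on the number of blocks, a pendant block $B$ supplied by Lemma~\ref{pendant block} attached at a single cut-vertex, deletion of $B\backslash\{v\}$, a case analysis on whether $B$ is an edge or a cycle and on whether the cut-vertex is an endpoint or an interior vertex of its path in a minimal covering of $G'$, and then the sandwich $P(G')+c=P(G)\leq Z(G)\leq Z(G')+c$. That part is sound and is argued at the same level of detail as the paper. (One small slip: a cycle is never covered by a single induced path, so $P(C_n)=2$ for every $n\geq 3$; drop the parenthetical ``one path for small $n$.'')

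The genuine problem is your third paragraph, where you try to establish the ``furthermore'' clause by decomposing an \emph{arbitrary} minimal path covering $\mathcal{P}$ of $G$. The paper does not do this: it argues forward, constructing a particular minimal covering of $G$ from one of $G'$ and showing that this covering coincides with forcing chains; the ``any minimal covering'' statement is essentially what the authors still pose as an open question for block-cycle graphs in their concluding section. Your converse step fails at the point you yourself flag: the truncated family need not be a minimal covering of $G'$, and the repair ``merging or trimming at $v$'' destroys the correspondence you need. Concretely, take $G'=K_{1,3}$ with centre $u$; in every minimal (two-path) covering of $G'$ the vertex $u$ is interior, so attaching a pendant edge $uv$ gives $G=K_{1,4}$ with $P(G)=P(G')+1=3$. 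The minimal covering $\{a,u,v\},\{b\},\{d\}$ of $G$ routes the path through $u$ out to $v$; deleting $v$ leaves three paths covering $K_{1,3}$, which is not minimal, so the induction hypothesis does not apply, and if you merge $\{a,u\}$ with $\{b\}$ to restore minimality, induction returns the chains $\{a,u,b\},\{d\}$ rather than the paths of $\mathcal{P}$. (In this example the desired forcing chains do exist --- colour $a,b,d$ black --- but your inductive scheme does not produce them.) So either restrict the ``furthermore'' to the covering you construct in the inductive step, as the paper does, or supply a genuinely new argument for recovering an arbitrary minimal covering as forcing chains; as written, that step is a gap.
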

\begin{proof}
  We prove this theorem by induction on the number of blocks in
  $G$ along with Lemma~\ref{pendant block}. The only block-cycle
  graphs with one block are either an edge or a cycle; the
  theorem is clearly true for these two graphs. Assume that the theorem is
  true for all block-cycle graphs $G'$ with fewer than $t$
  blocks.

  Let $G$ be a block-cycle graph, then according to Lemma~\ref{pendant
    block}, there is a pendant block, $B$, in $G$ which is connected
  to the other blocks in $G$ only through the vertex $u$. Let $G'$ be the
  graph formed by removing all the vertices of the block $B$, except $u$, from
  $G$. 

  The graph $G'$ has $t-1$ blocks, so by the induction hypothesis
  $Z(G')=P(G')$ and appropriately chosen end-points of the paths in a
  minimal path covering of $G'$ forms a zero forcing set.  By
  Lemma~\ref{P(G-B), G is a block-cycle}, we have $P(G')\leq P(G)$ and
  there are two possible cases to consider.

\medskip
\underline{Case 1:} There is a minimal path-cover $\PP$ for the graph $G'$ in
  which $u$ is the end-point of a path $P$. %I dropped the length more than 1
\smallskip

  First assume that $B$ is the edge $uv$. Then $G'$ is the
  graph formed by removing the pendant vertex $v$ from
  $G$. Since $u$ is an end-point of $P$ and $v$ is only connected to
  $u$, returning $B$ to $G'$ does not change the path cover number of
  the graph. By the induction hypothesis, the paths in the path-cover
  $\PP$ are the forcing chains of the forcing process initiated by the
  end-points of the paths in $\PP$. Also since $u$ is an end-point of
  $P$, we can assume that it does not perform any forces. Therefore, the
  zero forcing process will be continued by using $u$ to force
  $v$. Thus,
\[
P(G')=P(G)\leq Z(G)\leq Z(G')=P(G'),
\]
which implies $Z(G)=P(G)$.

Next assume that $B$ is a pendant cycle. Then
\[
P(G') + 1  \leq P(G), 
\]
since at least two paths are needed to cover the vertices of a cycle.
Let $v$ and $w$ be the two neighbours of $u$ in $B$. Then, since $u$
is an end-point of $P$, the induced path $P' = P\cup \left(V(B) \backslash
  \{v\}\right)$ covers all vertices in the cycle $B$ except $v$.
Thus  $(\PP \setminus \{P\}) \cup \{P', \{v\}\}$ is a path cover for $G$ and
$P(G)=P(G')+1$. 

We also need that these paths are forcing chains. By assigning colour black to the
vertex $v$, all vertices in $B$ will be coloured by continuing the
forcing process in $P$ through $u$. Thus,
\[
P(G')+1=P(G)\leq Z(G)\leq Z(G')+1=P(G')+1,
\]
which implies $Z(G)=P(G)$.

\medskip
\underline{Case 2:} In every minimal path covering $\PP$ of $G'$, the
vertex $u$ is an inner vertex of a path $P$ in $\PP$.
\smallskip

Again, we first assume that $B$ is an edge $uv$.  Since $\PP\cup
\{v\}$ covers all the vertices of $G$, we have that $P(G) \leq
P(G')+1$.  But if $P(G) = P(G')$, then $v$ is covered in the same path
as $u$ in a path covering of $G'$; this contradicts the fact that $u$
is not an end-point of any path in any path covering of $G$. Thus
$P(G)=P(G')+1$.

If the vertex $v$ is assigned the colour black, then we are 
  able to colour the graph $G$ following the same forcing process
  which we followed to colour the graph $G'$. Thus,
\[
P(G')+1=P(G)\leq Z(G)\leq Z(G')+1=P(G')+1,
\]
which implies $Z(G)=P(G)$.

If $B$ is a cycle, then $P(G)= P(G')+1$, since $\PP$ along with a path
covering the vertices of $B\backslash\{u\}$, covers all vertices of
$G$. 

Let $w$ be a vertex in $B$ that is a neighbour of $u$.  The set of
initial set of black vertices in the zero forcing set of $G'$ along
with $w$ forms a zero forcing set for $G$. Thus,
\[
P(G')+1=P(G)\leq Z(G)\leq Z(G')+1=P(G')+1,
\]
which implies $Z(G)=P(G)$.
\end{proof}

The following corollary is obtained from the fact that any unicyclic
graph is a block-cycle graph. This result was also recently shown to be true in~\cite{MR2917419}.
\begin{cor}\label{unicyclics}
 If $G$ is a unicyclic graph, then $Z(G)=P(G)$.\qed
\end{cor}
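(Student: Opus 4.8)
The plan is to observe that this corollary is an immediate consequence of Theorem~\ref{For block-cycle Z(G)=P(G)}, so the only thing requiring attention is the verification that every unicyclic graph belongs to the class of block-cycle graphs; once that containment is in hand, no further analysis of the parameters $Z$ and $P$ is needed.

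First I would recall that a unicyclic graph $G$ is a connected graph containing exactly one cycle, equivalently a connected graph with $|E(G)|=|V(G)|$, and let $C$ denote its unique cycle. I would then argue that every block of $G$ is either a single edge or the cycle $C$. Indeed, any block $B$ that is not a single edge is a non-separable graph on at least three vertices, hence contains a cycle; since $G$ has only one cycle, that cycle must be $C$, so $C\subseteq B$. A short argument then shows $B$ cannot properly contain $C$: adjoining any further vertex or edge to $C$ within a non-separable subgraph would produce a second cycle, contradicting unicyclicity. Hence $B=C$. Therefore every block of $G$ is an edge or a cycle, which is exactly the definition of a block-cycle graph. (If one instead adopts the paper's stated convention that a unicyclic graph \emph{is} a block-cycle graph with a single cycle, this paragraph collapses to nothing.)

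Having placed $G$ in the block-cycle family, I would simply invoke Theorem~\ref{For block-cycle Z(G)=P(G)} to conclude $Z(G)=P(G)$; the same theorem additionally supplies the structural statement that the paths in any minimal path covering of $G$ arise as the forcing chains of an appropriate minimal zero forcing process, although that refinement is not needed for the corollary. I do not anticipate any genuine obstacle: the entire content of the corollary is the routine class-inclusion check, with all the substantive work already carried out in the proof of Theorem~\ref{For block-cycle Z(G)=P(G)}.
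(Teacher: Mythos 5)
Your proposal is correct and follows the same route as the paper: the paper obtains the corollary directly from the observation that every unicyclic graph is a block-cycle graph and then applies Theorem~\ref{For block-cycle Z(G)=P(G)}. Your extra block-by-block verification (or the note that the paper's definition makes the inclusion immediate) is a fine, slightly more explicit version of the same argument.
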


It seems that for general graphs it is rare to have the equality
$Z(G)=P(G)$. To show this, along with the fact that the discrepancy
between $Z(G)$ and $P(G)$ can be arbitrarily large, we focus on the
family of graphs with $P(G)=2$.
\begin{prop}\label{prop:allkvalues}
  Let $G$ be a graph with $P(G)=2$ and two covering paths $P_1$ and
  $P_2$ with $|P_1|=m$ and $|P_2|=n$.  Then
\[
2 \leq Z(G)\leq \min \{n,m\}+1.
\]
Moreover, for any number $k$ in this interval, there is a graph $G$
satisfying $P(G)=2$ with $Z(G)=k$.
\end{prop}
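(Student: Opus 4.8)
The plan is to prove the two inequalities directly and then exhibit an explicit family of graphs realizing every admissible value. The lower bound $Z(G)\ge 2$ is immediate from Proposition~\ref{P(G)<=Z(G)} together with the hypothesis $P(G)=2$. For the upper bound I would assume without loss of generality that $m\le n$ and write $P_2=v_1v_2\cdots v_n$; the claim is that $S:=V(P_1)\cup\{v_1\}$, a set of $m+1$ vertices, is a zero forcing set of $G$. The key point is that $P_2$ is an \emph{induced} path, so the only neighbours of $v_i$ lying in $P_2$ are $v_{i-1}$ and $v_{i+1}$. Consequently, once every vertex of $P_1$ is black, $v_1$ has $v_2$ as its unique white neighbour (its other neighbours all lie in $P_1$) and forces it; and inductively $v_i$ forces $v_{i+1}$, since $v_{i-1}$ together with every neighbour of $v_i$ in $P_1$ is already black. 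Hence $Z(G)\le|S|=m+1=\min\{m,n\}+1$.

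For the realizability statement, for each integer $k\ge 2$ I would let $G_k$ be the graph obtained from a path $P'$ on $k-1$ vertices and a path $P''$ on $n$ vertices, with $n\ge k$, by adding every edge joining $V(P')$ to $V(P'')$. Then $P'$ and $P''$ are disjoint induced paths covering $G_k$ and $G_k$ is not itself a path, so $P(G_k)=2$; and the upper bound just proved gives $Z(G_k)\le(k-1)+1=k$. For the reverse inequality I would invoke the elementary fact that $Z(H)\ge\delta(H)$ for every graph $H$ — the vertex performing the first force of any forcing process on a proper black subset must have all but one of its neighbours black, so that subset has at least $\delta(H)$ vertices, and the full vertex set is larger still — and then note that $\delta(G_k)\ge k$: a vertex of $P''$ is adjacent to all $k-1$ vertices of $P'$ and to at least one further vertex of $P''$, while a vertex of $P'$ is adjacent to all $n\ge k$ vertices of $P''$. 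Thus $Z(G_k)=k$. A minor elaboration — joining only an initial segment $u_1,\dots,u_{j-1}$ of a longer path $P_1=u_1\cdots u_m$ to all of $P_2=v_1\cdots v_n$ — realizes an arbitrary value $j$ with $2\le j\le\min\{m,n\}+1$ while keeping covering paths of the prescribed lengths $m$ and $n$.

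The step I expect to be the main obstacle is the lower bound on the zero forcing number in these constructions. For $G_k$ the clean inequality $Z\ge\delta$ disposes of it, but in the segment-join variant the tail $u_j,\dots,u_m$ of $P_1$ has small degree and supports cheap forcing chains, so one must argue that those chains stall at $u_{j-1}$ and cannot be completed to a zero forcing set of size $j-1$; that is, one has to genuinely exclude every small set of initial black vertices, not merely the obvious candidates.
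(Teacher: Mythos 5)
Your treatment of the two inequalities coincides with the paper's: the lower bound is just Proposition~\ref{P(G)<=Z(G)}, and the upper bound uses exactly the paper's set $V(P_1)$ together with an endpoint of $P_2$, with the forcing along the induced path $P_2$ written out explicitly rather than declared obvious. For the ``moreover'' part you take a genuinely different route. The paper keeps two paths of the prescribed sizes $m$ and $n$, joins the first $k$ vertices of $P_1$ to all of $P_2$, and asserts without argument that the resulting graph has zero forcing number $k+1$. Your primary construction instead joins a path on $k-1$ vertices completely to a path on $n\ge k$ vertices, and that choice is what makes your proof close: every vertex of $G_k$ then has degree at least $k$, so the elementary inequality $Z(H)\ge\delta(H)$ (which you justify correctly via the vertex performing the first force) gives $Z(G_k)\ge k$, while the general upper bound gives $Z(G_k)\le k$. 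This settles the existence claim as literally stated, and more rigorously than the paper's ``it is easy to observe.'' The trade-off, which you identify accurately, is that your witness has covering paths of sizes $k-1$ and $n$ rather than $m$ and $n$; your segment-join variant, which is precisely the paper's construction, would keep the prescribed sizes, but there the minimum-degree bound fails because the tail $u_j,\dots,u_m$ contains vertices of degree at most two, and one must argue directly that a forcing process started from fewer than $j$ black vertices can only propagate along the tail and stalls before any vertex of the joined segment or of $P_2$ (each of which has at least $j-1$ neighbours outside the tail) can perform a force. If the proposition is read as demanding covering paths of sizes exactly $m$ and $n$ for the witness, that short case analysis should be supplied --- it is equally absent from the paper's proof; under the literal reading, your $G_k$ already suffices.
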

\begin{proof}
  Assume that $m\leq n$ and note that the claim that $Z(G)\geq 2$ is trivial. 

  Let $B$ be the set consisting of $V(P_1)$ and an end-point of $P_2$.
  Obviously $B$ is a zero forcing set for $G$. Thus $Z(G)\leq |B|=m+1\leq \min
  \{n,m\}+1$.

  Let $k$ be any number in the interval $\{1,\dots, \min\{m,n\}\}$
  and $P_1$ and $P_2$ be two paths with $|P_1|=m$ and
  $|P_2|=n$. Starting with an end-point of $P_1$ make each of the
  first $k$ consecutive vertices of $P_1$ adjacent to all of the
  vertices of $P_2$.  Then, it is easy to observe that forcing number
  of this graph is $k+1$ (the forcing set consists on the consecutive
  $k$ vertices of $P_1$ along with an end-point of $P_2$).
\end{proof}

In the next section we discuss a family of graphs for which $P(G)=Z(G) =2$.

\section{Double paths}
\label{doublepaths}

A graph is \textsl{outerplanar} if there exists a planar embedding of
the graph in which all the vertices are contained in a single face.
If an outerplanar graph, that is not a path, has the property that its
vertices can be covered with two induced paths, then the graph is
called a \textsl{double path} or a \textsl{parallel path}. Such graphs
were also called a \textsl{graph of two parallel paths} in
\cite{johnson2009graphs}. We refer to these two induced paths,
naturally, as the \textsl{covering paths}.  Clearly if $G$ is a double
path, then $P(G)=2$. We will show that $Z(G)$ is also $2$, so double
paths are another family of graphs for which the path cover number
equals the zero forcing number.

\begin{thm}\label{ZdoublePath}
If $G$ is a double path, then $Z(G)=2$.
\end{thm}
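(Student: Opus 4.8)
The plan is to exhibit an explicit zero forcing set of size $2$; since $P(G)=2$ for a double path, Proposition~\ref{P(G)<=Z(G)} already gives $Z(G)\geq P(G)=2$, so it suffices to find two black vertices whose derived set is all of $V(G)$. Let $Q_1$ and $Q_2$ be the two covering paths of $G$, and orient each of them, writing $Q_1 = (a_1,\dots,a_p)$ and $Q_2 = (b_1,\dots,b_q)$. The natural candidate zero forcing set is $\{a_1,b_1\}$, i.e.\ one endpoint from each covering path; the whole content of the proof is to show that, after choosing the planar embedding and orienting the two paths compatibly with it, the colour change rule propagates colour all the way along both paths.

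First I would set up the geometry. Fix an outerplanar embedding of $G$ with all vertices on the outer face, and record the cyclic order in which the vertices appear on that face. The key structural fact to extract is that the two covering paths, together with the outerplanar embedding, interact in a restricted way: each vertex $a_i$ of $Q_1$ has at most one neighbour on $Q_2$ lying "ahead" of the current frontier, and similarly with the roles reversed — intuitively, because two induced paths drawn inside a disk with endpoints on the boundary cannot cross arbitrarily. I would make this precise by arguing that, with the right orientation of $Q_1$ and $Q_2$, the neighbours on $Q_2$ of $a_1,a_2,\dots$ occur in (weakly) increasing order along $Q_2$, and symmetrically. This monotonicity is exactly what rules out the obstruction to forcing: a black vertex with two white neighbours.

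With that in hand the forcing argument is an induction along the two paths run in parallel. Start with $a_1,b_1$ black. At a generic stage suppose $a_1,\dots,a_i$ and $b_1,\dots,b_j$ are black and everything else is white. Because $a_i$'s only possible white neighbours are $a_{i+1}$ and, by monotonicity, at most one vertex of $Q_2$ — and that vertex, if white, is $b_{j+1}$ — one of $a_i$ or $b_j$ sees exactly one white neighbour unless both see two. I would show the "both see two" case cannot occur: if $a_i$ sees white $a_{i+1}$ and white $b_{j+1}$ while $b_j$ sees white $b_{j+1}$ and white $a_{i+1}$, then $a_i,a_{i+1},b_j,b_{j+1}$ together with the edges among them would force a crossing (or an induced subgraph incompatible with the embedding and with $Q_1,Q_2$ being induced). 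Hence at every stage a force is available, each force extends one of the two black prefixes by one vertex, and the process terminates only when $i=p$ and $j=q$, i.e.\ when all of $V(G)$ is black. Therefore $\{a_1,b_1\}$ is a zero forcing set and $Z(G)=2$.

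The main obstacle is the geometric lemma in the middle paragraph: pinning down precisely why an outerplanar graph that is a union of two induced paths admits orientations of those paths making the cross-path adjacencies monotone, and translating "a crossing is impossible" into the clean combinatorial statement "no black vertex ever has two white neighbours during the parallel scan." Once that monotonicity/non-crossing statement is established, the inductive forcing argument is routine bookkeeping.
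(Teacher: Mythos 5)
Your proposal takes essentially the same route as the paper: fix an outerplanar embedding, orient the two covering paths compatibly with it, take the two ``left'' endpoints as the candidate forcing set, and use the fact that edges cannot cross to show the forcing propagates along $P_1$ and $P_2$, with $P(G)=2$ supplying the lower bound. The paper's own proof is in fact terser than yours---it simply asserts that since no edges cross, these two endpoints form a zero forcing set---so your monotonicity and parallel-scan discussion is just a more detailed write-up of the same idea, with the same geometric fact about the embedding left as the unproven crux.
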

\begin{proof}
  Assume that $P_1$ and $P_2$ are the two covering paths of $G$ in a
  given embedding of $G$.  Thus the paths are fixed and we can talk
  about the left end points of the paths (or, equivalently, the right
  end points).  Let $u$ be the left end point of $P_1$ and $v$ the
  left end point of $P_2$. It is not difficult to deduce that since no
  edges are cross in this graph, the set $\{u,v\}$ forms a minimal
  zero forcing set for $G$.
\end{proof}

Note, if we choose the left endpoints $\{u,v\}$ of a double path, then
it follows that $P_1$ and $P_2$ (as in the proof above) will also form
the corresponding zero forcing chains for a double path.

The concept of a double path can be generalized. If the vertices of a graph $G$ can be
partitioned into paths $P_1,P_2,\dots ,P_k$ so that:
\begin{enumerate}
\item the only vertices not in the path $P_i$ that are adjacent to a vertex in $P_i$ are in either $P_{i-1}$ or $P_{i+1}$ 
(assume $P_0$ and $P_{k+1}$ are the empty set), and
\item  the graph induced by $P_{i}$ and $P_{i+1}$ is a double
path for $i =1,\dots, k-1$, 
\end{enumerate}
then $G$ is called a \textsl{series of parallel paths}.

\begin{thm}\label{pathseries}
  If $G$ is a series of parallel paths, then $Z(G) =P(G)$. Moreover,
  the left (or right) endpoints of the covering paths form a zero
  forcing set.
\end{thm}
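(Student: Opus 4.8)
The plan is to mimic the block-cycle argument of Theorem~\ref{For block-cycle Z(G)=P(G)} by inducting on the number $k$ of paths in the defining partition $P_1,\dots,P_k$ of the series of parallel paths $G$. The base case $k=1$ is a single path (or, if $P_1$ is further split trivially, a double path), which is already handled: $Z(G)=1=P(G)$ for a path, and Theorem~\ref{ZdoublePath} covers $k=2$. So assume the result holds for every series of parallel paths defined by fewer than $k$ paths, and let $G$ have partition $P_1,\dots,P_k$. First I would observe that $P(G)\le k$, since $P_1,\dots,P_k$ is itself a path cover; by Proposition~\ref{P(G)<=Z(G)} it then suffices to exhibit a zero forcing set of size $P(G)$ whose chains are exactly the paths of a minimal path cover, and to pin down $P(G)$ exactly.

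The key structural step is to peel off $P_k$. Let $G' = G \setminus V(P_k)$, which is the series of parallel paths $P_1,\dots,P_{k-1}$, so by induction $Z(G')=P(G')$ and the left endpoints $u_1,\dots,u_{k-1}$ of $P_1,\dots,P_{k-1}$ form a minimal zero forcing set of $G'$ whose forcing chains are precisely $P_1,\dots,P_{k-1}$. Now I would argue two things. First, $P(G)=P(G')+1$: on one hand $P_1,\dots,P_k$ witnesses $P(G)\le P(G')+1$ (note $P(G')\le k-1$, and in fact the inductive statement should be strengthened so that the \emph{natural} cover $P_1,\dots,P_{k-1}$ is already minimal for $G'$); on the other hand, since $P_k$ contains vertices adjacent only to $P_{k-1}$ and $P_{k-1}\cup P_k$ is a double path, one shows no induced path can run from $P_k$ through $P_{k-1}$ into the rest of $G$ while still leaving the remaining vertices coverable by $P(G')$ paths — so $P(G)\ge P(G')+1$. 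Second, I would add the left endpoint $v$ of $P_k$ to the forcing set: running the forcing process on $G$ from $\{u_1,\dots,u_{k-1},v\}$, the chains $P_1,\dots,P_{k-1}$ still form inside $G$ because each vertex of $P_i$ still sees exactly one white neighbour at the moment it forces (the only new neighbours lie in $P_{i-1}$ or $P_{i+1}$, which are black or being filled in the same left-to-right sweep), and $v$ forces along $P_k$ by the double-path argument of Theorem~\ref{ZdoublePath} applied to $P_{k-1}\cup P_k$. This gives
\[
P(G)=P(G')+1 \le Z(G) \le Z(G')+1 = P(G')+1,
\]
so $Z(G)=P(G)$, with the left endpoints as the required forcing set.

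The main obstacle I expect is the forcing-chain bookkeeping in the non-pendant-style interaction between consecutive strips: unlike in the block-cycle case, where the pendant block attaches at a single cut-vertex, here $P_k$ can be glued to $P_{k-1}$ along many edges, so I must be careful that when a vertex $x\in P_{k-1}$ is about to force its successor in $P_{k-1}$, its neighbours in $P_k$ are already black (or are not white-neighbour obstructions). This is exactly where the ``no crossing edges'' feature of a double path — the fact that the adjacency between $P_{k-1}$ and $P_k$ is an interval-nested, non-crossing pattern — must be invoked, presumably by choosing the order of forces so that the whole of $P_k$ is processed before (or in lockstep with) the relevant portion of $P_{k-1}$, just as in the remark following Theorem~\ref{ZdoublePath}. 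Making condition~(1) of the definition do its job here — guaranteeing that $P_k$'s vertices have \emph{no} neighbours outside $P_{k-1}\cup P_k$ — is what keeps the induction clean, and I would state the strengthened inductive hypothesis (that $P_1,\dots,P_k$ is simultaneously a minimal path cover and a minimal zero forcing chain decomposition with the left endpoints as the forcing set) at the outset so that both the $P(G)=P(G')+1$ step and the chain-preservation step have exactly what they need.
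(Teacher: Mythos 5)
Your overall strategy is the same as the paper's: induct on the number of strips, delete the last path $P_k$, invoke the inductive hypothesis for $G'=G\setminus V(P_k)$, add the left endpoint of $P_k$ to the forcing set, and argue the forcing chains are exactly $P_1,\dots,P_k$. That part, including the non-crossing bookkeeping you worry about, is essentially what the paper does (the paper simply asserts it).

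The genuine gap is the step you add to pin down $P(G)$: the claim $P(G)=P(G')+1$, equivalently your strengthened hypothesis that the strips $P_1,\dots,P_k$ form a minimum path cover of $G$. This is not justified by your sketch and is in fact false under the stated definition. Take $G$ to be the $3\times 3$ grid with its three rows as $P_1,P_2,P_3$: each consecutive pair of rows induces the $2\times 3$ grid, which is outerplanar and covered by its two rows, hence a double path, so $G$ is a series of three parallel paths. Yet the induced seven-vertex path $(1,1),(1,2),(1,3),(2,3),(3,3),(3,2),(3,1)$ together with the edge on $(2,1),(2,2)$ is a path cover, so $P(G)=2$, while $G'$ (the first two rows) also has $P(G')=2$; minimum path covers need not respect the strips, and ``no induced path can run from $P_k$ through $P_{k-1}$ into the rest of $G$'' is exactly what fails. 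You should also be aware that the paper's own proof is silent on this point --- it only verifies that the left endpoints form a zero forcing set whose chains are the $P_i$, and never addresses the path cover number --- so your instinct that an extra argument is needed is correct, but the proposed one cannot work: in this example the paper's own remark gives $Z(G)=\min\{3,3\}=3>2=P(G)$, so the equality $Z(G)=P(G)$ cannot be recovered for this decomposition at all without an additional hypothesis such as requiring the strips themselves to be a minimum path cover.
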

\begin{proof}
  We use induction on the number of paths. If there are two paths,
  then the result follows from Theorem~\ref{ZdoublePath} and assume
  the result holds for any series of parallel paths with $k$ paths.

  Let $G$ be a graph that is the series of $k+1$ parallel paths and
  assume that $P_{k+1}$ is the final path. Set $G' = G \backslash
  (P_{k+1})$. The set of left end points of the path covering of $G'$ forms a zero
  forcing set of $G'$. Then these end points, together with
  the left end point of $P_{k+1}$ forms a zero forcing set for $G$ and
  the forcing chains are the paths,  $P_1,P_2,\dots ,P_{k+1}$.
\end{proof}

We note here that Theorem \ref{pathseries} may be used to yield the zero 
forcing number of the grid, namely the cartesian product of two paths (see also
\cite{aim}). That is, the zero forcing number of the $m$-by-$n$ grid is given by
$\min \{ m,n \}$.

Theorem~\ref{ZdoublePath} can also be obtained from \cite[Theorem
5.1]{johnson2009graphs}, although zero forcing is not considered in \cite{johnson2009graphs}.  
In fact, they show that among all the graphs
with $P(G)=2$, only those that are also outerplanar satisfy M$(G)=2$.

There are outerplanar graphs for which the path cover number and the
zero forcing number are arbitrarily far apart (see, for example,
\cite[Ex. 2.11]{MR2645093}).  Motivated by this, in the next section
we consider positive zero forcing sets, and the positive zero forcing
number.

\section{Positive zero forcing sets}
\label{forcing_trees}

In 2010, a variant of the zero forcing number, called 
positive semi-definite zero forcing or the positive zero forcing
number, was introduced in \cite{MR2645093}, and a collection of its properties
were discussed in \cite{ekstrand2011positive} and
\cite{ekstrand2011note}.  The positive zero forcing number is also
based on a colour change rule that is very similar to the zero forcing
colour change rule.  Let $G$ be a graph and $B$ a set of vertices; we
will initially colour of the vertices of $B$ black and all other
vertices white. Let $W_1,\dots,W_k$ be the set of vertices of the
connected components of $G\backslash B$.  If $u$ is a vertex in $B$
and $w$ is the only white neighbour of $u$ in the graph induced by
$V(W_i\cup B)$, then $u$ can force the colour of $w$ to black. This is
the \textsl{positive colour change rule}.  The definitions and terminology for
the positive zero forcing process, such as, colouring, derived set,
positive zero forcing number etc., are identical to those for the zero
forcing number, except we use the positive semi-definite colour change
rule.

The size of the smallest positive zero forcing set of a graph $G$ is
denoted by $Z_+(G)$. Note that for any non-empty graph
\[
1\leq Z_+(G) \leq |V(G)|-1;
\]
the lower bound holds with equality if and only if $G$ is a tree and
the upper bound only for complete graphs. Also for all graphs $G$,
since a zero forcing set is also a positive zero forcing set we have
that $Z_+(G) \leq Z(G)$. Moreover, in \cite{MR2645093} it was observed that
$\Mp(G) \leq Z_{+}(G)$, for any graph $G$.

We have seen that applying the zero forcing colour change rule to the
vertices of a graph produces a path covering for the
graph. Analogously, applying the positive colour change rule produces
a set of induced trees in the graph, and we refer to these trees as
\textsl{forcing trees}.  To define these trees, let $G$ be a graph and
$Z_p$ be a positive zero forcing set of $G$. Construct the derived
set, recording the forces in the order in which they are performed;
this is the chronological list of forces.  Note that in applying the
colour change rule once, two or more vertices can perform forces at
the same time and a vertex can force multiple vertices from different
components at the same time.  For any chronological list of forces, a
forcing tree is an induced rooted tree, $T_r$, formed by a sequence of
sets of vertices $(r,X_1,\dots, X_k)$, where $r\in Z_p$ is the root
and the vertices in $X_i$ are at distance $i$ from $r$ in the
tree. The vertices of $X_i$ for $i=1,\dots, k$, are forced by applying
the positive semi-definite colour change rule with the vertices in
$X_{i-1}$; so for any $v \in X_i$ there is a $u \in X_{i-1}$, such
that $u$ forces $v$ if and only if $v$ is a neighbour of $u$.  

In a forcing tree, the vertices in $X_i$ are said to be the vertices
of the $i$-th level in the tree. Note that the vertices in a specific
level may have been forced in different steps of the positive
semi-definite colour change procedure and they may also perform forces
in different steps.

\begin{example} 
  The positive zero forcing number of the graph $G$ in
  Figure~\ref{ex.forcing trees} is three and $\{1,3,10\}$ is a positive zero forcing set for
  the graph. The forcing trees in this colouring procedure (as
  depicted in Figure~\ref{ex.forcing trees}) are as follows: \newline
  $T_1=\{1,X_1,X_2,X_3\}$, where $X_1=\{2,6\}$, $X_2=\{5,7\}$, $X_3=\{4,8\}$,\\
  $T_{3}=\{3\}$.
  $T_{10}=\{10 ,X_1\}$, where  $X_1=\{9\}$.\\
\begin{figure}
\centering
\vspace{.5cm}
\includegraphics[width=.6\textwidth]{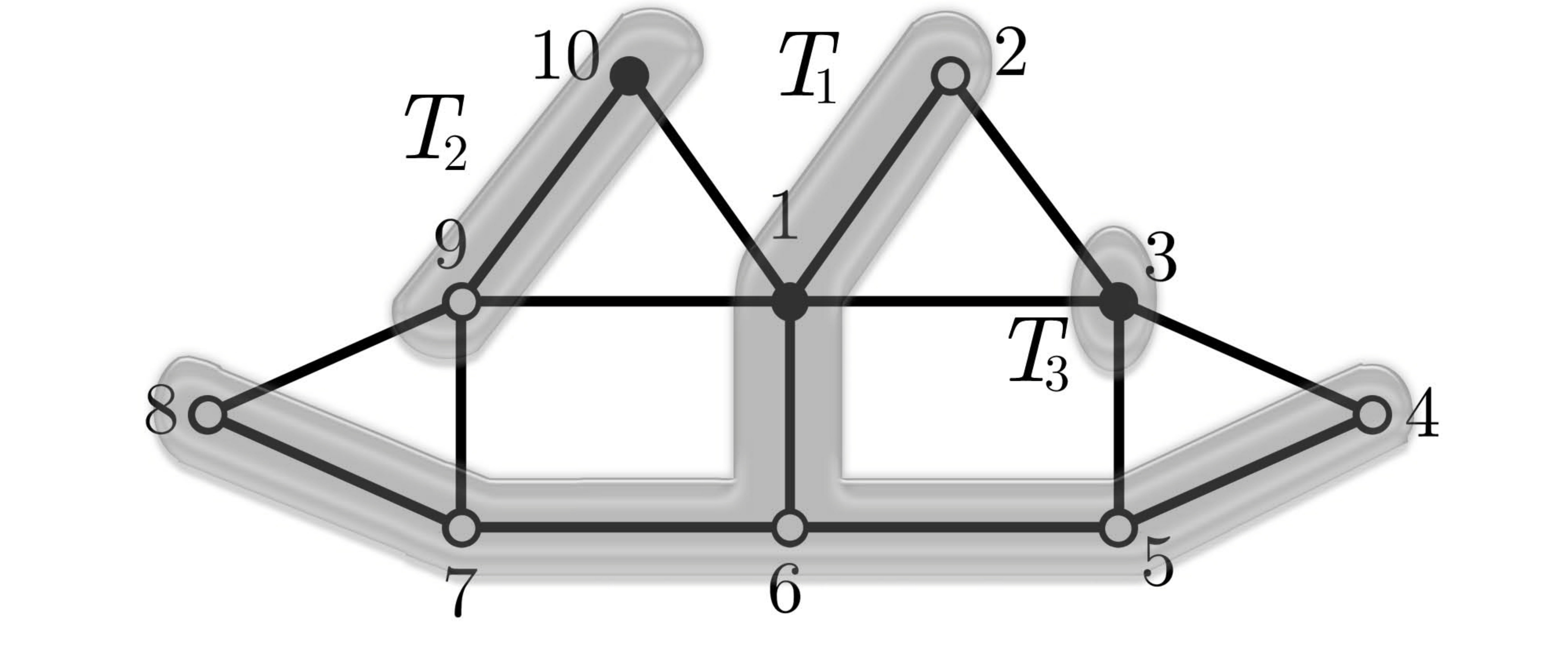} \\
\caption{The forcing trees in a graph}
\label{ex.forcing trees}
\end{figure}
\end{example}

Our next concept is analogous to the path cover number of a graph.  A
\textsl{tree covering of a graph }is a family of induced vertex
disjoint trees in the graph that cover all vertices of the graph. The
minimum number of such trees that cover the vertices of a graph $G$ is
the {\em tree cover number} of $G$ and is denoted by $\T(G)$. Any set of
forcing trees corresponding to a minimal positive zero forcing set is of size $Z_+(G)$ and
covers all vertices of the graph. This implies the following.
\begin{prop}\label{T(G)&Z_+(G)}
For any graph $G$, we have $\T(G)\leq Z_+(G)$.\qed
\end{prop}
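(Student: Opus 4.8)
The statement to prove is Proposition~\ref{T(G)&Z_+(G)}: for any graph $G$, $\T(G) \le Z_+(G)$. This is the positive-semidefinite analogue of Proposition~\ref{P(G)<=Z(G)}, and the paper has essentially set up everything needed in the preceding paragraphs, so the proof should be short.

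The plan is to argue directly from the definition of forcing trees. First I would fix a minimal positive zero forcing set $Z_p$ of $G$, so $|Z_p| = Z_+(G)$, and run the positive zero forcing process, recording a chronological list of forces. Then, as described in the paragraph defining forcing trees, each root $r \in Z_p$ generates a forcing tree $T_r = (r, X_1, \dots, X_k)$ built level by level according to which vertex forced which. The key claims to verify are: (i) each $T_r$ is an induced tree in $G$ (it is acyclic because each newly forced vertex is attached to exactly one already-coloured vertex as its parent, and it is induced because the construction uses the induced-subgraph neighbourhoods from the positive colour change rule); (ii) the trees $\{T_r : r \in Z_p\}$ are pairwise vertex-disjoint — this is because in the positive forcing process every vertex not in $Z_p$ is forced, and although a single vertex may force several vertices in the same step, each forced vertex has a \emph{unique} forcer (the one recorded in the chronological list), so each non-root vertex belongs to exactly one tree, determined by tracing its forcing chain back to its root in $Z_p$; and (iii) together they cover $V(G)$, since $Z_p$ is a zero forcing set and hence the derived set is all of $V(G)$.

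Putting these together, $\{T_r : r \in Z_p\}$ is a family of $|Z_p| = Z_+(G)$ induced, vertex-disjoint trees covering $V(G)$, i.e.\ a tree covering of $G$ of size $Z_+(G)$. By the definition of the tree cover number as the minimum size of such a covering, $\T(G) \le Z_+(G)$. The only mild subtlety — the step I would be most careful about — is the disjointness argument (ii), precisely because the positive colour change rule allows a vertex to force multiple vertices simultaneously, so one has to lean on the convention (already fixed in the excerpt's definition of forcing trees) that a chronological list of forces assigns each forced vertex a single forcer, which makes the "belongs to exactly one tree" assignment well-defined. Everything else is immediate from the definitions, so the write-up can be just a few sentences, matching the terse style of Proposition~\ref{P(G)<=Z(G)}; indeed the paper already states the result with a \qed, signalling that no displayed proof is expected.
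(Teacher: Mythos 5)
Your proposal is correct and follows exactly the paper's own (implicit) argument: the forcing trees associated with a minimum positive zero forcing set form a family of $Z_+(G)$ vertex-disjoint induced trees covering $V(G)$, so $\T(G)\leq Z_+(G)$ by minimality of the tree cover number. The extra care you take with disjointness (each forced vertex has a unique forcer in the chronological list) is a reasonable elaboration of what the paper leaves to its definition of forcing trees, not a different route.
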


This bound is clearly tight for trees and cycles. But there are
graphs, such as complete bipartite graphs, for which the discrepancy
between these parameters can be arbitrarily large. It is, hence, an
interesting question to ask for which families of graphs does equality
hold between these two parameters. One way to approach this problem is
to find a graph operation which preserves the equality in graphs for
which these parameters agree. Using this approach, in the next section
we will prove that equality between these two parameters holds for
block-cyclic graphs.  In Section~\ref{doubletrees} we will define a
family of graphs called \textsl{double trees}, these are analogous
to the double paths of Section~\ref{doublepaths} and we will show that
the positive zero forcing number and the tree cover number of these
graphs coincide. We will also show these parameters are equal for a
much larger family of graphs in Section~\ref{outerplanar}.

%%%%%%%%%%%%%%%%%%%%%%%%%%%%%%
\section{Vertex-sum of graphs}
\label{vertex_sum_of_two_graphs}

Let $G$ and $H$ be two graphs and assume that $v$ is a vertex of both
$G$ and $H$, then the \textsl{vertex sum of $G$ and $H$ over $v$} is
the graph formed by identifying $v$ in the two graphs.  The vertex-sum
is denoted by $G \stackplus{v} H$. 

A block-cycle graph can be
recursively defined as the vertex sum of a block-cycle graph with
a cycle or a path. The next result shows how to calculate the tree cover
number of the vertex sum of two graphs.

\begin{lem}\label{Tofvertexsum}
For any graphs $G$ and $H$, both with an identified vertex $v$, we have 
\[
T(G\,\stackplus{v}\,H)=T(G)+T(H)-1.
\]
\end{lem}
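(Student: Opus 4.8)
The plan is to prove the two inequalities $T(G\,\stackplus{v}\,H)\le T(G)+T(H)-1$ and $T(G\,\stackplus{v}\,H)\ge T(G)+T(H)-1$ separately, the first by exhibiting a concrete tree cover of the vertex sum and the second by restricting an optimal tree cover of the vertex sum back to the two pieces.

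For the upper bound, I would take a minimum tree cover $\mathcal{T}_G$ of $G$ and a minimum tree cover $\mathcal{T}_H$ of $H$. Let $T_G\in\mathcal{T}_G$ be the tree containing $v$ and $T_H\in\mathcal{T}_H$ the tree containing $v$. The natural candidate cover of $G\,\stackplus{v}\,H$ is $(\mathcal{T}_G\setminus\{T_G\})\cup(\mathcal{T}_H\setminus\{T_H\})\cup\{T_G\cup T_H\}$, which has $T(G)+T(H)-1$ members and covers every vertex. The point requiring a short argument is that $T_G\cup T_H$ (glued at $v$) is again an \emph{induced} tree in $G\,\stackplus{v}\,H$: since $v$ is a cut vertex of the vertex sum, there are no edges between $V(G)\setminus\{v\}$ and $V(H)\setminus\{v\}$, so the induced subgraph on $V(T_G)\cup V(T_H)$ is exactly the union of the two induced trees sharing only the vertex $v$, hence connected and acyclic. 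This gives $T(G\,\stackplus{v}\,H)\le T(G)+T(H)-1$.

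For the lower bound, start with a minimum tree cover $\mathcal{T}$ of $G\,\stackplus{v}\,H$, so $|\mathcal{T}|=T(G\,\stackplus{v}\,H)$. Again using that $v$ is a cut vertex, each tree $T\in\mathcal{T}$ that does not contain $v$ lies entirely inside $G$ or entirely inside $H$ (a connected subgraph avoiding the cut vertex cannot meet both sides). Let $T_0$ be the unique tree of $\mathcal{T}$ containing $v$; then $T_0\cap G$ and $T_0\cap H$ are each induced forests, and I would split each into its connected components. Restricting $\mathcal{T}$ to the $G$-side (replacing $T_0$ by the components of $T_0\cap G$) yields a tree cover of $G$, and similarly for $H$; summing the two sizes double-counts only the components of $T_0$ meeting $v$ — the cleanest bookkeeping is to note $T_0\cap G$ and $T_0\cap H$ are both nonempty, each contributes at least one tree to its side's cover, and these two "extra" trees correspond to the single tree $T_0$, so $T(G)+T(H)\le |\mathcal{T}|+1$, i.e. $T(G)+T(H)-1\le T(G\,\stackplus{v}\,H)$.

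The main obstacle is the bookkeeping in the lower bound: one must be careful that when $T_0\cap G$ (or $T_0\cap H$) has several connected components only \emph{one} of them contains $v$, and only that shared component is being identified across the two sides; the other components are genuinely new trees that must be charged correctly so the count comes out to exactly $+1$ and not more. Phrasing it as "the restriction of $\mathcal{T}$ to $G$ has at most $|\mathcal{T}| - (\text{number of }\mathcal{T}\text{-trees strictly inside }H)$ trees" and symmetrically for $H$, then adding, is the safest route and should close the argument.
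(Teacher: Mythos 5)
Your overall strategy coincides with the paper's: the upper bound via gluing the two trees through $v$ into the single induced tree $T_G\,\stackplus{v}\,T_H$ is exactly the paper's construction, and the lower bound obtained by examining the tree of an optimal cover of $G\,\stackplus{v}\,H$ that contains $v$ is the paper's argument recast as a direct count rather than a contradiction.

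There is, however, one genuine gap in your lower bound. Your count $T(G)+T(H)\le|\mathcal{T}|+1$ is valid only if the tree $T_0$ containing $v$ meets each side in exactly one connected piece; if $T_0\cap G$ and $T_0\cap H$ together had three or more components, restricting $\mathcal{T}$ to the two sides would produce at least $|\mathcal{T}|+2$ trees in total and the desired bound would fail. Your closing remarks suggest the opposite --- that several components of $T_0\cap G$ could be ``charged correctly so the count comes out to exactly $+1$'' --- and your proposed reformulation (``the restriction of $\mathcal{T}$ to $G$ has at most $|\mathcal{T}|$ minus the number of trees strictly inside $H$'') is precisely the unproved assertion that $T_0\cap G$ is connected, not a proof of it. Fortunately the missing fact is true and takes one line: since $v$ is the only vertex shared by the two sides and there are no edges between $V(G)\setminus\{v\}$ and $V(H)\setminus\{v\}$, the unique path in $T_0$ from any vertex of $V(T_0)\cap V(G)$ to $v$ can never enter $V(H)\setminus\{v\}$ (it would have to pass through $v$ twice), so $T_0\cap G$ is a single tree containing $v$, and likewise $T_0\cap H$. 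With this observation your count closes, and the resulting proof agrees with the paper's, which relies on the same connectivity fact implicitly when it asserts that at least $T(G)-1$ trees of the covering are needed for the vertices of $G$ outside the tree containing $v$.
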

\begin{proof}
  Let $\TT_G$ and $\TT_H$ be minimal tree coverings of $G$ and
  $H$, respectively, and suppose $T_1\in \TT_G$ and $T_2\in \TT_H$ are the
  trees covering $v$. Let $T_v=T_1 \,\stackplus{v}\, T_2$. Observe
  that $T_v$ is an induced tree in $G\,\stackplus{v}\,H$ that also covers
  $v$. Therefore,
\[
\left(\left(\TT_G  \cup \TT_H\right) \backslash \{T_1,T_2\}\right)\cup T_v
\]
is a tree covering of $G\,\stackplus{v}\,H$.

Next we show, in fact, the vertices of $G\,\stackplus{v}\,H$ can not
be covered with fewer trees.  Suppose that the vertices of
$G\,\stackplus{v}\,H$ can be covered with $T(G)+T(H)-2$ induced
disjoint trees.  Let $T$ be the tree that covers $v$ in such a tree
covering of $G\,\stackplus{v}\,H$. At least $T(G)-1$ trees are needed
to cover the vertices of $G\backslash T$. Since the number of trees in
the covering of $G\,\stackplus{v}\,H$ is $T(G)+T(H)-2$, the vertices
of $H\backslash T$ are covered by at most ($T(H)-2$) trees, but this
contradicts the fact that $T(H)$ is the least number of
trees that cover the vertices of $H$. 
\end{proof}

\begin{lem}\label{Z+ofvertexsum}
For any graphs $G$ and $H$, both with an identified vertex $v$, we have 
\[
Z_+(G\,\stackplus{v}\,H)=Z_+(G)+Z_+(H)-1.
\]
\end{lem}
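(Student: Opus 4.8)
The plan is to prove the two inequalities separately, mirroring the structure of the proof of Lemma~\ref{Tofvertexsum}. For the upper bound $Z_+(G\,\stackplus{v}\,H)\leq Z_+(G)+Z_+(H)-1$, I would start with minimal positive zero forcing sets $Z_G$ of $G$ and $Z_H$ of $H$. The subtlety is that I cannot just take $Z_G\cup Z_H$ blindly, because forces in one side could be disrupted by the extra components contributed by the other side through the shared vertex $v$. The clean way around this is to first argue that one may choose a minimal positive zero forcing set of $G$ that contains $v$ (if $v\notin Z_G$, replace the element that first forces $v$, or the root of the forcing tree containing $v$, by $v$ itself — this does not increase the size and still yields a valid forcing set). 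Do the same for $H$. Then take $Z := Z_G \cup Z_H$, which has size $|Z_G|+|Z_H|-1$ since the two sets share exactly $v$. Now run the positive forcing process on $G\,\stackplus{v}\,H$: since $v$ is already black, the components of $(G\,\stackplus{v}\,H)\setminus Z$ on the $G$-side and on the $H$-side are genuinely separate (a white path between the two sides would have to pass through $v$, which is black), so the positive colour change rule applied on the $G$-side sees exactly the graph $G$ with black set $Z_G$, and likewise for $H$. Hence both original forcing processes go through verbatim and all of $G\,\stackplus{v}\,H$ is coloured.

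For the lower bound $Z_+(G\,\stackplus{v}\,H)\geq Z_+(G)+Z_+(H)-1$, I would argue by contradiction in the style of the second half of the proof of Lemma~\ref{Tofvertexsum}, but using forcing trees rather than arbitrary trees. Suppose $\widehat Z$ is a positive zero forcing set of $G\,\stackplus{v}\,H$ with $|\widehat Z| \leq Z_+(G)+Z_+(H)-2$. Run a positive forcing process and look at the resulting forcing trees; let $T$ be the forcing tree containing $v$. The key claim is that the forces that occur entirely on the $G$-side, restricted appropriately, constitute a valid positive forcing process for $G$ starting from the set $(\widehat Z \cap V(G)) \cup \{v\}$ — because at the moment any $G$-side vertex is forced, the relevant component of the uncoloured part inside $G$ is the same whether we think of it inside $G$ or inside $G\,\stackplus{v}\,H$ (again, $v$ is the only possible bridge, and once $v$ is black the two sides decouple; before $v$ is black, a $G$-side vertex forced via a $G$-side black vertex still only depends on the $G$-side component). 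This shows $|(\widehat Z \cap V(G)) \cup \{v\}| \geq Z_+(G)$, hence $|\widehat Z \cap V(G)| \geq Z_+(G)-1$, and symmetrically $|\widehat Z \cap V(H)| \geq Z_+(H)-1$. Since $\widehat Z \cap V(G)$ and $\widehat Z \cap V(H)$ overlap in at most the single vertex $v$, we get $|\widehat Z| \geq (Z_+(G)-1)+(Z_+(H)-1) - 1 = Z_+(G)+Z_+(H)-3$, which is not immediately the contradiction we want; the overlap bookkeeping needs to be done more carefully, distinguishing whether $v\in\widehat Z$ or not, and one should instead directly bound $|\widehat Z| = |\widehat Z\cap V(G)| + |\widehat Z\cap V(H)| - [v\in\widehat Z]$ and combine with $|(\widehat Z\cap V(G))\cup\{v\}|\geq Z_+(G)$.

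The main obstacle, and the step requiring genuine care, is justifying that a positive forcing process on $G\,\stackplus{v}\,H$ decomposes into independent positive forcing processes on $G$ and on $H$ once $v$ is black — in particular that the \emph{connected-component} condition in the positive colour change rule behaves well under the vertex sum. The crucial structural fact is that $v$ is a cut vertex of $G\,\stackplus{v}\,H$, so that for any set $B$ of black vertices with $v\in B$, every connected component of $(G\,\stackplus{v}\,H)\setminus B$ lies entirely within $V(G)\setminus\{v\}$ or entirely within $V(H)\setminus\{v\}$; this is what makes the rule ``local'' to each side. Handling the early stage of the process, before $v$ has been coloured, requires noting that a vertex $w$ of $V(G)\setminus\{v\}$ can only be forced by a black vertex of $V(G)$ and only the $G$-side component matters, so the restriction to $G$ is still a legitimate process — and then observing that $v$ does get coloured by some side, after which everything decouples. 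Once this decomposition lemma is in hand, both inequalities follow in a couple of lines, exactly as sketched above.
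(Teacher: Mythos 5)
Your overall strategy---decouple the positive forcing process at the cut vertex $v$ and count on each side---is in the same spirit as the paper's proof, which glues the two forcing trees through $v$ into $T_1\stackplus{v}T_2$ and then ``argues as in Lemma~\ref{Tofvertexsum}''. However, as written your argument has two genuine gaps. The decisive one is the lower bound: by your own accounting you only reach $Z_+(G)+Z_+(H)-3$, and the repaired bookkeeping you propose still falls short, because when $v\notin\widehat Z$ it gives only $|\widehat Z|=|\widehat Z\cap V(G)|+|\widehat Z\cap V(H)|\geq (Z_+(G)-1)+(Z_+(H)-1)=Z_+(G)+Z_+(H)-2$. The missing step is to use the process once more: if $v\notin\widehat Z$, then $v$ is forced by a single vertex $u$, and $u$ lies on one side, say $u\in V(G)$ (no vertex of $V(H)\setminus\{v\}$ is adjacent to a vertex of $V(G)\setminus\{v\}$). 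Then the restriction of the chronological list of forces to $G$ already colours $v$, so $\widehat Z\cap V(G)$ \emph{by itself} is a positive zero forcing set of $G$, and only the $H$-side needs $v$ appended; this yields $|\widehat Z|\geq Z_+(G)+(Z_+(H)-1)$, which is the bound actually required. Without singling out the side that forces $v$, the lower bound is not established. (Your structural claim that the process restricted to one side is a valid positive forcing process is fine, since the relevant white components in $G$ are contained in those of $G\,\stackplus{v}\,H$.)

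The second gap is in the upper bound: the claim that one may always replace a minimum positive zero forcing set of $G$ by one of the same size containing $v$, ``by swapping $v$ with the root of the forcing tree containing $v$,'' is asserted without proof and is not a routine step. The analogous statement for ordinary zero forcing is false (an interior vertex of a path lies in no minimum zero forcing set), so any justification must genuinely exploit the positive colour change rule; as it stands you are invoking an unproved lemma of comparable depth to the statement being proved. The paper avoids this re-rooting claim by taking arbitrary minimum positive forcing sets of $G$ and $H$, merging only the two forcing trees that contain $v$, and verifying that the resulting $Z_+(G)+Z_+(H)-1$ trees serve as forcing trees of $G\,\stackplus{v}\,H$. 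If you prefer your route, you must supply a proof of the ``minimum positive forcing set through a prescribed vertex'' fact (it does hold for $Z_+$, but requires an argument in the style of the re-rooting done in Lemmas~\ref{lemma1}--\ref{lemma3}), or else rework the upper bound so that the $H$-side process is shown to run correctly once the $G$-side process has coloured $v$.
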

\begin{proof}
To prove this equality, let $\TT_G$ and $\TT_H$ be the sets of
forcing trees for a minimal positive zero forcing set in $G$ and $H$ respectively. Let
$T_1\in \TT_G$ and $T_2\in \TT_H$ be the trees that contain $v$. Then
$T_1\,\stackplus{v}\,T_2$ is a forcing tree in $G\,\stackplus{v}\,H$
covering $v$. Then similar reasoning as in the previous lemma applies.
\end{proof}

\begin{cor}\label{Z+vertexsum}
If $G$ and $H$ are two graphs that satisfy $Z_+(G)=T(G)$ and $Z_+(H)=T(H)$, then 
\[
Z_+(G\,\stackplus{v}\,H)=T(G\,\stackplus{v}\,H),
\] where $v$ is an identified vertex in both $G$ and in $H$. \qed
\end{cor}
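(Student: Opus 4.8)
The plan is to simply combine Lemma~\ref{Tofvertexsum} and Lemma~\ref{Z+ofvertexsum} with the two hypotheses. By Lemma~\ref{Z+ofvertexsum} we have $Z_+(G\,\stackplus{v}\,H)=Z_+(G)+Z_+(H)-1$, and by the hypotheses $Z_+(G)=T(G)$ and $Z_+(H)=T(H)$, so this equals $T(G)+T(H)-1$. By Lemma~\ref{Tofvertexsum}, $T(G)+T(H)-1=T(G\,\stackplus{v}\,H)$. Chaining these three equalities gives $Z_+(G\,\stackplus{v}\,H)=T(G\,\stackplus{v}\,H)$, which is the claim.

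There is essentially no obstacle here: the corollary is a one-line consequence of the two preceding lemmas, whose proofs do the real work (constructing a single induced tree through $v$ by gluing the two trees covering $v$, and a counting argument for the lower bound). The only thing worth checking is that the identified vertex $v$ is the same in both invocations, which it is by hypothesis, and that both lemmas apply to arbitrary graphs $G$ and $H$ with an identified vertex, which they do. Hence the short computation above is a complete proof.
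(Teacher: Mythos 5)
Your proof is correct and is exactly the argument the paper intends: the corollary is stated with a \qed precisely because it follows immediately by chaining Lemma~\ref{Z+ofvertexsum}, the hypotheses $Z_+(G)=T(G)$ and $Z_+(H)=T(H)$, and Lemma~\ref{Tofvertexsum}. Nothing further is needed.
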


Since a block-cycle graph is the vertex sum of a block-cycle graph and
either a cycle or a path we have the following result.

\begin{cor}
If $G$ is a block-cycle graph, then $Z_{+}(G) = T(G)$. \qed
\end{cor}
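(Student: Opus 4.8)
The plan is to establish the result by structural induction on the number of blocks, exactly mirroring the inductive scheme used for Theorem~\ref{For block-cycle Z(G)=P(G)}, but now feeding off Corollary~\ref{Z+vertexsum} rather than reproving everything from scratch. The base case is the observation that a single-block block-cycle graph is either an edge $K_2$ or a cycle $C_n$, and for both of these $Z_+(G) = T(G)$: an edge is a tree, so $Z_+(K_2) = 1 = T(K_2)$, and for a cycle one checks directly that one black vertex forces the whole cycle (so $Z_+(C_n) = 1$) while a single induced tree — a Hamilton path of the cycle — covers all vertices, giving $T(C_n) = 1$ as well. Both statements already appear informally in the excerpt (``This bound is clearly tight for trees and cycles'').

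For the inductive step, suppose the equality $Z_+ = T$ holds for every block-cycle graph with fewer than $t$ blocks, and let $G$ have exactly $t$ blocks. By Lemma~\ref{pendant block}, $G$ has a pendant block $B$; write $v$ for the unique cut-vertex of $G$ lying in $B$, and let $G'$ be the component of $G$ obtained by deleting $V(B)\setminus\{v\}$. Then $G = G' \stackplus{v} B$, the graph $G'$ is a block-cycle graph with $t-1$ blocks, and $B$ is a cycle or an edge, so both $G'$ and $B$ satisfy $Z_+ = T$ — the former by the induction hypothesis, the latter by the base case. Corollary~\ref{Z+vertexsum} now gives $Z_+(G) = Z_+(G' \stackplus{v} B) = T(G' \stackplus{v} B) = T(G)$, which closes the induction.

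The only point requiring any genuine care is the decomposition step: one must be sure that a pendant block meeting the rest of the graph in a single cut-vertex really does exhibit $G$ as an honest vertex sum $G'\stackplus{v}B$ in the sense of Section~\ref{vertex_sum_of_two_graphs}, i.e. that $V(G')\cap V(B) = \{v\}$ and no edges of $G$ join $V(B)\setminus\{v\}$ to $V(G')\setminus\{v\}$. This is immediate from the definition of a pendant block (it shares only the one vertex $v$ with the other blocks) together with the fact that in a block-cycle graph distinct blocks meet in at most one vertex, as noted before Lemma~\ref{pendant block}. There is no real obstacle here — the substance of the argument has been pushed into Corollary~\ref{Z+vertexsum} and the recursive description of block-cycle graphs as iterated vertex sums with cycles and paths — so the corollary is essentially a one-line consequence once the base case is recorded.
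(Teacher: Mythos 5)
Your overall route is the paper's route: the paper obtains this corollary precisely by viewing a block-cycle graph recursively as a vertex sum of a smaller block-cycle graph with a cycle or a path and invoking Corollary~\ref{Z+vertexsum}; your induction on the number of blocks via a pendant block (Lemma~\ref{pendant block}) is just a careful spelling-out of that recursion, and your check that a pendant block really yields a vertex sum is fine.

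However, your base case for a cycle contains two false claims. First, $Z_+(C_n)\neq 1$: under the positive colour change rule, a single black vertex $u$ leaves one connected component $C_n\setminus\{u\}$, and $u$ has \emph{two} white neighbours in the graph induced by that component together with $u$, so no force can ever be performed; indeed the paper states that $Z_+(G)=1$ holds if and only if $G$ is a tree, so $Z_+(C_n)=2$. Second, $T(C_n)\neq 1$: a tree covering must consist of \emph{induced} trees, and the subgraph of $C_n$ induced by all of its vertices is the cycle itself, not a Hamilton path, so $T(C_n)=2$ (split the cycle into two induced paths). The sentence you quote, ``this bound is clearly tight for trees and cycles,'' asserts only that $T=Z_+$ for cycles, not that both equal $1$. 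The gap is harmless for the corollary itself, since $Z_+(C_n)=T(C_n)=2$ still gives the equality needed in the base case, and with that correction your induction via Corollary~\ref{Z+vertexsum} goes through exactly as in the paper; but as written the justification of the base case contradicts the definitions of both parameters.
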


\section{Double trees}
\label{doubletrees}
In the next section, we will show that $Z_+(G) = T(G)$ for every
outerplanar graph. The first step towards verifying this claim is to show that it
holds for a subset of these graphs called \textsl{double trees}.
Recall that in Section~\ref{doublepaths} we defined a double path, whereas a
double tree can be viewed as an extension of the concept of a double
path. If the vertices of a connected outerplanar graph, which is not a
tree, can be covered with two induced trees, then the graph is called
a \textsl{double tree}.

Our first step will be to show that if $G$ is a double path, then
$Z_+(G) = T(G) = 2$. Then any double tree can be constructed by applying
an appropriate series of vertex sums of trees with an appropriate
double path.  Thus, from Corollary~\ref{Z+vertexsum}, we will be able
to conclude that $Z_+(G) = T(G) = 2$ holds for any double tree.

In the following three lemmas we find different positive zero forcing
sets for double paths.  In all of these lemmas we will assume that $G$
is a double path with a specific planar embedding of $G$ with covering
paths $P_1$ and $P_2$. Since this planar embedding of $G$ is fixed, we
can refer to the end points of a covering path as the right end point
and the left end point.

\begin{lem}\label{lemma1} 
  If $u$ and $v$ are both right (or both left) end points of $P_1$ and
  $P_2$, respectively, then $\{u,v\}$ is a positive zero forcing set
  of $G$. Moreover, if $\{u,v\}$ are initially coloured black, then
  there is a positive zero-forcing process in which the forcing trees are $P_1$
  and $P_2$.
\end{lem}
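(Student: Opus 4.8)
The plan is to set up coordinates on the double path and then run the positive colour change rule directly, showing that at every stage the black set "advances" one vertex along each of $P_1$ and $P_2$, so that the forcing trees are exactly $P_1$ and $P_2$. Write $P_1 = (a_1, a_2, \dots, a_m)$ and $P_2 = (b_1, b_2, \dots, b_n)$ in the fixed planar embedding, with $a_1, b_1$ the left end points and $a_m, b_n$ the right end points; by symmetry assume $u = a_1$, $v = b_1$. The central structural fact I would isolate first is a ``no crossing'' lemma for the embedding: because $G$ is outerplanar with all vertices on the outer face and $P_1, P_2$ are induced paths covering $V(G)$, any edge joining $P_1$ to $P_2$ runs between $a_i$ and $b_j$ in such a way that the bipartite ``rungs'' between the two paths are nested/monotone — precisely, if $a_i b_j$ and $a_{i'} b_{j'}$ are edges with $i < i'$ then $j \le j'$ (and symmetrically). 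This is the same planarity observation already invoked in the proof of Theorem~\ref{ZdoublePath}, so I would state it once and reuse it.

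Next I would describe the forcing process. Initially $B = \{a_1, b_1\}$, and $G \backslash B$ has (in general) a single connected component, but the key point is that within that component $a_1$ sees $a_2$ and possibly some $b_j$'s, while $b_1$ sees $b_2$ and possibly some $a_i$'s. The claim I want is that after the ``first round'' the black set is $\{a_1, a_2, b_1, b_2\}$, and inductively after round $t$ it is $\{a_1, \dots, a_{t+1}\} \cup \{b_1, \dots, b_{t+1}\}$ (capping at the ends of each path). To get this, suppose the black set is the ``staircase'' $S_t = \{a_1,\dots,a_p\} \cup \{b_1,\dots,b_q\}$ for appropriate $p, q$; I would examine the components $W_1, \dots, W_k$ of $G \backslash S_t$ and argue, using the nesting lemma, that the vertex $a_{p+1}$ lies in some component $W_i$ in which $a_p$ has $a_{p+1}$ as its unique white neighbour (any rung $a_p b_j$ with $j$ white would force $j \ge q+1$, but nesting plus the fact that $b_1,\dots,b_q$ are already black forces such a $b_j$ either not to exist or to lie in a different component from $a_{p+1}$, separated by the already-black boundary). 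Hence $a_p \to a_{p+1}$, and symmetrically $b_q \to b_{q+1}$, both legal simultaneously. Once the black set is all of $V(G)$, the chronological list of forces is exactly $a_1 \to a_2 \to \cdots \to a_m$ and $b_1 \to b_2 \to \cdots \to b_n$, so the two forcing trees are the induced paths $P_1$ and $P_2$.

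The conclusion then has two halves. That $\{u,v\}$ is a positive zero forcing set follows because the process above blackens everything; that the forcing trees can be taken to be $P_1$ and $P_2$ follows because the forcing trees are by definition built from the levels $X_i$ of the chronological list of forces rooted at $u$ and $v$, and the argument exhibits a list in which $u$'s tree has $i$-th level $\{a_{i+1}\}$ and $v$'s tree has $i$-th level $\{b_{i+1}\}$ — i.e.\ exactly the paths $P_1$ and $P_2$ as induced rooted paths. I would also remark that minimality/optimality is not asserted here (the lemma only claims it is \emph{a} positive zero forcing set), so no lower-bound argument is needed; indeed $\T(G) = 2$ already gives $Z_+(G) \ge 2$ if one wants it, via Proposition~\ref{T(G)&Z_+(G)}.

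The main obstacle I anticipate is the component bookkeeping in the inductive step: when $G \backslash S_t$ has several components, one must be careful that a black vertex $a_p$ does not have two white neighbours inside the \emph{same} component $V(W_i \cup B)$ — a rung $a_p b_j$ with $b_j$ white could in principle spoil the force. The resolution is precisely the nesting/monotonicity of the rungs together with the staircase shape of $S_t$: any white $b_j$ adjacent to $a_p$ has $j \ge q+1$, and the sub-path $b_{q+1}, \dots, b_j$ cannot reach $a_{p+1}$ without crossing an already-black vertex, so $b_j$ and $a_{p+1}$ lie in different components of $G \backslash S_t$; thus within each component $a_p$ has a unique white neighbour. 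Making this separation argument airtight — essentially a small planar-graph cut argument — is the only genuinely non-routine part; the rest is mechanical induction on the number of forcing rounds.
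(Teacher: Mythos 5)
Your overall strategy (run the positive colour change rule along the two paths and keep the forces on path edges so that the forcing trees come out as $P_1$ and $P_2$) is sound, but the inductive invariant you use is false, and the separation argument offered to justify it does not hold. You claim that after round $t$ the black set is the symmetric staircase $\{a_1,\dots,a_{t+1}\}\cup\{b_1,\dots,b_{t+1}\}$, i.e.\ that $a_p\to a_{p+1}$ and $b_q\to b_{q+1}$ are ``both legal simultaneously,'' because any white rung-neighbour $b_j$ of $a_p$ must lie in a different component of $G\setminus S_t$ than $a_{p+1}$. This fails already on four vertices: take $P_1=(a_1,a_2)$, $P_2=(b_1,b_2)$ with rungs $a_1b_1$, $a_1b_2$, $a_2b_2$ (this is $K_4$ minus the edge $a_2b_1$, an outerplanar double path with non-crossing, monotone rungs). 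With $\{a_1,b_1\}$ black, the white vertices $a_2$ and $b_2$ are adjacent, hence in the same component, so $a_1$ has two white neighbours in that component and cannot force; only $b_1\to b_2$ is legal at the first step. The flaw in your ``cut'' argument is that nothing in the nesting of rungs prevents the white vertex $b_j$ (e.g.\ $b_{q+1}$) from being adjacent to $a_{p+1}$ directly or through other white vertices; your claim that the subpath $b_{q+1},\dots,b_j$ ``cannot reach $a_{p+1}$ without crossing an already-black vertex'' is simply not implied by planarity.

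The statement can be rescued, but with an asymmetric invariant: with a staircase black set, non-crossing of the rungs implies that $a_p$ and $b_q$ cannot \emph{both} have white neighbours on the opposite path (edges $a_pb_j$ with $j>q$ and $a_ib_q$ with $i>p$ would cross), so at least one of the two frontier vertices has its path-successor as its unique white neighbour and can force along its own path; iterate, advancing one frontier at a time. That repaired argument is essentially the paper's proof of Lemma~\ref{lemma1}, which proceeds by induction on $|V(G)|$: among the two left end points, either one is pendant or (if they are adjacent) one has degree two, and in either case that vertex forces its successor on its own path, after which the induction hypothesis applies to $G$ minus that vertex with the new pair of left end points. As written, your proof has a genuine gap at exactly the step you flagged as the main obstacle.
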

\begin{proof} 
  We prove this lemma by induction on the number of vertices. The result is
  true for $C_3$, which is a connected double path on the fewest number of
  vertices.  Assume that it is true for all graphs $H$ with
  $|V(H)|<n$. 

  Let $G$ be a graph on $n$ vertices. Assume that $u$ and $v$ are left
  end points of $P_1$ and $P_2$, respectively. By assigning the colour
  black to each of these vertices we claim that $\{u,v\}$ is a positive zero forcing set of
  $G$. If $u$ is a pendant vertex, then it forces its only neighbour,
  say $w$ (which must be in $P_1)$, which is a left end point of a
  covering path in $G\backslash u$. Thus by the induction hypothesis
  $\{w,v\}$ is a positive zero forcing set of $G$ and there is a positive zero forcing process in
  which the forcing trees are $P_1$ and $P_2$. Similarly if $v$ is a
  pendant vertex, using a similar reasoning, the lemma follows.  

  If neither $u$ nor $v$ are pendant, then $u$ and $v$ are adjacent
  and since both are on the same side, at least one of them, say $u$,
  is of degree two. Let $w$ be the only neighbour of $u$ in
  $P_1$. Thus $u$ can force $w$ (its only white neighbour) and again
  by the induction hypothesis $\{w,v\}$ is a positive zero forcing set of $G$ and there is
  a positive zero forcing process in which the forcing trees are $P_1$ and
  $P_2$. The same reasoning applies when $u$ and $v$ are the right end
  points of $P_1$ and $P_2$, respectively.
\end{proof}

\begin{lem}\label{lemma2}
  If $u$ and $v$ are two vertices of $P_1$ and $P_2$, respectively,
  which form a cut set for $G$, then $\{u,v\}$ is a positive zero
  forcing set for $G$. Moreover, there is a positive  zero forcing forcing
  process in which $P_1$ and $P_2$ are the forcing trees (with $u$ and
  $v$ the roots of the tree).
\end{lem}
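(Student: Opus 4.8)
The plan is to prove Lemma~\ref{lemma2} by reducing to the already-established Lemma~\ref{lemma1}. The key observation is that if $u \in P_1$ and $v \in P_2$ form a cut set for $G$, then removing $\{u,v\}$ disconnects $G$ into pieces, and because of the parallel-path structure (no crossing edges in the fixed embedding), each such piece together with $u$ and $v$ is again a double path in which $u$ and $v$ sit at end points of the induced covering paths. More precisely, write $P_1 = (a_1,\dots,u,\dots,a_p)$ and $P_2 = (b_1,\dots,v,\dots,b_q)$ with respect to the fixed embedding; the vertices strictly to the left of $u$ on $P_1$ and strictly to the left of $v$ on $P_2$ can only be adjacent (across the two paths) to vertices that are themselves weakly left of $u,v$, since an edge joining something left of $u$ to something right of $v$ (or vice versa) would cross the $u$--$v$ separation and contradict either the cut-set hypothesis or outerplanarity. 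Hence $G_L$, the subgraph induced by $\{$left part of $P_1\} \cup \{u\} \cup \{$left part of $P_2\} \cup \{v\}$, is a double path with covering paths ending (on the right) at $u$ and $v$; symmetrically $G_R$ is a double path with covering paths ending (on the left) at $u$ and $v$.

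First I would make the embedding and notation precise and verify the claim above that $\{u,v\}$ separates $G$ into a ``left'' double path $G_L$ and a ``right'' double path $G_R$ sharing exactly the vertices $u$ and $v$ (treating the degenerate cases where $u$ or $v$ is already an end point of its covering path, so that one side is trivial). Then I would apply Lemma~\ref{lemma1} to $G_L$: since $u,v$ are the right end points of the covering paths of $G_L$, colouring $\{u,v\}$ black yields a positive zero forcing process on $G_L$ whose forcing trees are exactly the two covering paths of $G_L$. Crucially, in that process $u$ and $v$ act purely as roots that only force leftward into $G_L$; they never need a neighbour inside $G_R$ to be white or black. So the same sequence of forces is valid in $G$ (the extra vertices of $G_R$ lie in other components of $G\backslash\{u,v\}$ and do not interfere with forces taking place inside the component containing the left parts). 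After all of $G_L$ is black, apply Lemma~\ref{lemma1} again to $G_R$, now with $u,v$ as the left end points of its covering paths: colouring $\{u,v\}$ black (which they now are) forces all of $G_R$ black, with forcing trees the covering paths of $G_R$. Concatenating, every vertex of $G$ becomes black, so $\{u,v\}$ is a positive zero forcing set; and the union of the left covering path of $G_L$ with the left covering path of $G_R$ (they meet only at $u$) is precisely $P_1$, and similarly for $P_2$, so the forcing trees are $P_1$ and $P_2$ with roots $u$ and $v$.

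The main obstacle I expect is the geometric/combinatorial verification that the cut set $\{u,v\}$ really does split $G$ into two double paths of the stated shape — i.e.\ that no edge of $G$ ``straddles'' the separation in a way that makes $G_L$ or $G_R$ fail to be a double path, or that makes $u$ or $v$ fail to be an end point of the relevant induced covering paths. This is where the outerplanarity (``no crossing edges'') and the cut-set hypothesis must be combined carefully; one has to argue that any vertex of $P_1$ left of $u$ adjacent to a vertex of $P_2$ must be adjacent only to vertices of $P_2$ weakly left of $v$, and conversely, so that deleting $u,v$ genuinely disconnects the two sides. Once that structural decomposition is in hand, the forcing argument is a routine two-step application of Lemma~\ref{lemma1}, together with the standard remark (as in the note following Theorem~\ref{ZdoublePath}) that forces happening inside one component of $G\backslash\{u,v\}$ are unaffected by the colours of vertices in other components.
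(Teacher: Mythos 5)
Your proposal is correct and follows essentially the same route as the paper: split $G$ at the cut set $\{u,v\}$ into a left piece and a right piece (each a double path with $u,v$ as end points of the truncated covering paths), apply Lemma~\ref{lemma1} to each piece, and glue the resulting forcing processes so that the forcing trees are $P_1$ and $P_2$. Your write-up is simply more explicit than the paper's about why each side is again a double path and why forces within one component of $G\backslash\{u,v\}$ are unaffected by the other side.
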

\begin{proof}
  Let $W_1\subseteq V(G)$ and $W_2\subseteq V(G)$ be the vertices of
  the left hand side and the right hand side components of
  $G\backslash \{u,v\}$, respectively, and let $G_1$ and $G_2$ be the
  subgraphs induced by $\{u,v\}\cup W_1$ and $\{u,v\}\cup W_2$,
  respectively. Then according to Lemma~\ref{lemma1}, $\{u,v\}$ is a
  positive zero forcing set for both $G_1$ and $G_2$ and thus a positive 
zero forcing set for $G$ and there is a positive zero forcing process in which $P_1$ and $P_2$ are the forcing trees.
\end{proof}

\begin{lem}\label{lemma3}
  If $u$ is a vertex in $P_1$ which is not an end point, then there
  always is a vertex $v$ in $P_2$ such that $\{u,v\}$ is a cut set of
  $G$.
\end{lem}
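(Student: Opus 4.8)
Fix the planar embedding of the double path $G$ with covering paths $P_1$ and $P_2$, and let $u$ be an interior vertex of $P_1$, so $u$ has a left neighbour and a right neighbour along $P_1$. The plan is to exhibit an explicit vertex $v \in P_2$ together with a verification that $\{u,v\}$ disconnects $G$ into a ``left'' part and a ``right'' part. The natural candidate for $v$ is determined by the outerplanar structure: since the edges of $G$ do not cross in the embedding, the vertices of $P_2$ that can possibly be adjacent to vertices of $P_1$ lying to the right of $u$ form a contiguous terminal segment of $P_2$, and likewise the vertices of $P_2$ adjacent to the left part of $P_1$ form a contiguous initial segment; I would define $v$ to be the rightmost vertex of $P_2$ that has a neighbour in $P_1$ on the left side of $u$ (inclusive), or equivalently the unique vertex of $P_2$ at which these two segments meet.

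**Key steps.** First I would make precise the ``left/right'' decomposition: let $L_1$ (resp. $R_1$) be the vertices of $P_1$ strictly to the left (resp. right) of $u$, and similarly split $P_2$ at the chosen vertex $v$ into $L_2$ and $R_2$. I then claim $W_1 = L_1 \cup L_2$ and $W_2 = R_1 \cup R_2$ are the vertex sets of the two components of $G \setminus \{u,v\}$. Second, I would verify there are no edges between $W_1$ and $W_2$. Within each $P_i$ this is immediate, since deleting one vertex of a path separates the two sides. The crucial case is an edge with one endpoint in $L_1$ (or $L_2$) and the other in $R_2$ (or $R_1$): such an edge, drawn together with the portion of $P_1$ from that left vertex through $u$ and the portion of $P_2$ through $v$, would have to cross another edge incident to $u$ or $v$ — contradicting planarity of the embedding. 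Making this crossing argument rigorous is where I would invoke the precise definition of the embedding (all vertices on one face, $P_1$ and $P_2$ drawn as the two ``sides''), essentially the same combinatorial observation used implicitly in the proof of Theorem~\ref{ZdoublePath}. Third, I would note that $\{u,v\}$ is indeed a genuine cut set, i.e. that both $W_1$ and $W_2$ are nonempty: $W_2$ contains the right neighbour of $u$ in $P_1$, and $W_1$ contains the left neighbour of $u$ in $P_1$, both of which exist because $u$ is not an end point. One small subtlety is the degenerate possibility that $P_2$ has no vertex adjacent to the left side of $u$ at all; in that case $v$ can be taken to be the left end point of $P_2$, and the decomposition still works with $L_2 = \emptyset$.

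**Main obstacle.** The routine parts are the path-splitting observations; the real content is the planarity/non-crossing argument that rules out edges between the two halves. The excerpt treats such arguments informally (``since no edges are cross in this graph''), so the main work is to set up just enough notation about the fixed embedding — an explicit left-to-right ordering of the vertices induced by the outer face — to turn ``the edge would cross'' into a clean contradiction. I would phrase it as: in the embedding, assign to each vertex its position along the single face; the two covering paths are monotone in this ordering; any edge of $G$ joins two vertices and, by planarity, the ``interval'' it spans cannot properly cross the interval spanned by another edge; choosing $v$ as the meeting point of the two relevant intervals of $P_2$ then forces every edge to stay within one half. Once that is in place, the conclusion that $\{u,v\}$ is a cut set is immediate, and (combined with Lemma~\ref{lemma2}) it gives that $\{u,v\}$ is a positive zero forcing set with forcing trees $P_1$ and $P_2$.
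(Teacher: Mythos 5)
Your proof is correct and rests on the same core idea as the paper's: the cut partner of $u$ is the extreme (rightmost/farthest) vertex of $P_2$ having a neighbour on the left side of $u$, with outerplanarity (non-crossing edges in the fixed embedding) ruling out any edge between the two resulting halves. The paper merely packages this as a contradiction argument (taking a non-pendant $v$ and then passing to the farthest vertex $w$ with the crossing property, which is exactly your $v$), whereas you construct $v$ directly and check the separation and nonemptiness explicitly; the planarity/crossing step is left at essentially the same level of informality in both.
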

\begin{proof} If $P_2$ contains at most two vertices this result is clear since $u$ is 
not an end point.
  Suppose there is a vertex $u$ in $P_1$ for which there is no vertex
  $v$ in $P_2$ such that $\{u,v\}$ is a cut set for $G$. This implies
  that $|P_2| \geq 3$ since if $P_2=\{v_1,v_2\}$, then at least one of
  $\{u,v_1\}$ and $\{u,v_2\}$ is a cut set.

  Let $v$ be any non-pendant vertex of $P_2$. Obviously $u$ is a cut
  vertex of $P_1$ and $v$ is a cut vertex of $P_2$. Since $\{u,v\}$ is
  not a cut set of $G$, there is a vertex in the left hand side (or
  right hand side) of $u$ that is adjacent to a vertex in right hand
  side (or left hand side) of $v$. Assume that $w$ is the farthest
  vertex from $v$ in $P_2$ having this property. Since $w$ is the
  farthest vertex from $v$ with the described property and $G$ is an
  outerplanar graph, $\{u,w\}$ is a cut set of $G$ which contradicts
  with the fact that, there is no vertex in $P_2$ that forms a cut set
  along with $u$ for $G$.
\end{proof}

Combining Lemmas \ref{lemma1}, \ref{lemma2} and \ref{lemma3} along
with the fact that in the proof of all these three lemmas forces are
performed along the covering paths, we have the following.

\begin{cor}\label{PZFS of double path}
  Let $G$ be a double path with covering paths $P_1$ and $P_2$. Then
  for any vertex $v$ in $P_1$, there is always another vertex $u$ in
  $P_2$ such that $\{u,v\}$ is a positive zero forcing set for
  $G$. Moreover there is a positive zero forcing process in which the two paths
  $P_1$ and $P_2$ are a minimal set of forcing trees in $G$.\qed
\end{cor}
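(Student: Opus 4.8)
The plan is to reduce the statement entirely to Lemmas~\ref{lemma1}, \ref{lemma2}, and \ref{lemma3} by a case split on the position of the given vertex $v$ in $P_1$. First I would fix the planar embedding of $G$ so that the notions of left and right end points of the covering paths $P_1,P_2$ are well defined, exactly as in those lemmas, and recall that a double path has $P(G)=2$ and is not a tree.

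If $v$ is an end point of $P_1$, say (without loss of generality) the left end point, then I would take $u$ to be the left end point of $P_2$; Lemma~\ref{lemma1} then states directly that $\{u,v\}$ is a positive zero forcing set of $G$ and that there is a positive zero forcing process whose forcing trees are $P_1$ and $P_2$. If instead $v$ is an interior vertex of $P_1$, then Lemma~\ref{lemma3} supplies a vertex $u\in P_2$ for which $\{u,v\}$ is a cut set of $G$, and Lemma~\ref{lemma2} applied to this pair gives that $\{u,v\}$ is a positive zero forcing set of $G$ with a forcing process whose forcing trees are $P_1$ and $P_2$ (rooted at $u$ and $v$). These two cases are exhaustive, so in all cases we obtain the desired partner vertex $u\in P_2$ together with a size-two positive zero forcing set realized by the two covering paths.

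It remains to note that $P_1$ and $P_2$ form a \emph{minimal} set of forcing trees. Since $G$ is outerplanar but by hypothesis not a tree, $G$ is not a tree, and hence $Z_+(G)\geq 2$. Combined with the size-two set just produced, this forces $Z_+(G)=2$, so $\{P_1,P_2\}$ is a minimum family of forcing trees for $G$. I do not anticipate any real obstacle: the only points needing care are that the case split is genuinely complete and that the output of Lemma~\ref{lemma3} (one vertex on each covering path forming a cut set) is exactly the hypothesis required by Lemma~\ref{lemma2}; both are immediate. All of the substantive work has already been carried out in the three preceding lemmas, which is why this statement is recorded as a corollary.
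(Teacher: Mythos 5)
Your proposal matches the paper's own argument: the corollary is obtained there by exactly the case split you describe --- Lemma~\ref{lemma1} when $v$ is an end point of $P_1$ (paired with the same-side end point of $P_2$), and Lemma~\ref{lemma3} followed by Lemma~\ref{lemma2} when $v$ is interior --- together with the observation that in all three lemmas the forces are performed along the covering paths. The only cosmetic difference is that you make the minimality of $\{P_1,P_2\}$ explicit via $Z_+(G)\geq 2$ (strictly, the definition of a double path only excludes $G$ being a path, not a tree, but this is the same implicit assumption the paper makes), a point the paper leaves unstated.
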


The following result is a consequence of Corollary~\ref{PZFS of double path}.
\begin{cor}\label{PZFS of double tree}
  Let $G$ be a double tree with covering trees $T_1$ and $T_2$.  Then
  for any vertex $v$ in $T_1$, there is always another vertex $u$ in
  $T_2$ such that $\{u,v\}$ is a positive zero forcing set for
  $G$. Moreover $\{T_1,T_2\}$ coincides with a minimal collection of
  forcing trees in $G$.\qed
\end{cor}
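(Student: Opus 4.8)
The plan is to reduce Corollary~\ref{PZFS of double tree} to the double-path case (Corollary~\ref{PZFS of double path}) via a sequence of vertex sums, exactly as announced in the introduction to Section~\ref{doubletrees}. First I would make precise the structural claim that underlies that paragraph: if $G$ is a double tree with covering trees $T_1$ and $T_2$, then $G$ can be built up from a double path $D$ (whose covering paths $Q_1\subseteq T_1$ and $Q_2\subseteq T_2$) by successively attaching trees via vertex sums at single vertices, where each attached tree is a ``branch'' hanging off $Q_1$ or $Q_2$. Concretely, pick a longest path $Q_i$ inside each $T_i$; since $G$ is outerplanar and $T_1\cup T_2$ covers $G$, the subgraph induced by $V(Q_1)\cup V(Q_2)$ is a double path $D$ (it is outerplanar, not a tree as long as $G$ is not a tree, and is covered by the two induced paths $Q_1,Q_2$), and every vertex of $G$ outside $D$ lies in a pendant subtree of $T_1$ or $T_2$ attached to $D$ at exactly one vertex. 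I expect verifying this decomposition — in particular that no edges of $G$ run between two different pendant branches or ``skip over'' $D$ in a way that would violate outerplanarity — to be the main obstacle, and it is essentially the only place where outerplanarity is used beyond what Corollary~\ref{PZFS of double path} already absorbs.

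Granting the decomposition, the argument is a clean induction on the number of vertices outside $D$. The base case is $D$ itself, where Corollary~\ref{PZFS of double path} gives the result directly: for any $v\in Q_1$ there is $u\in Q_2$ with $\{u,v\}$ a positive zero forcing set whose forcing trees are $Q_1$ and $Q_2$. For the inductive step, write $G = G' \stackplus{w} S$, where $S$ is a pendant branch (a tree) attached at the single vertex $w$, and $G'$ is the double tree obtained by deleting $V(S)\setminus\{w\}$. Say $w$ lies in $T_1$, so $w \in V(T_1')$ for the corresponding covering tree $T_1'$ of $G'$, and $T_1 = T_1' \stackplus{w} S$ while $T_2 = T_2'$. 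Given a target vertex $v\in T_1$: if $v\in V(G')$, apply the induction hypothesis to $G'$ to get $u\in T_2'$ with $\{u,v\}$ a positive zero forcing set of $G'$ whose forcing trees are $T_1'$ and $T_2'$; if instead $v$ lies strictly inside $S$, first run the trivial forcing inside the tree $S$ from $v$ down to $w$ (a tree on its own is positive-zero-forced by any single vertex, and the induced subtree $S$ sits in its own component once $G'$ is uncoloured), reaching $w\in V(T_1')$, and then invoke the induction hypothesis at $w$. Either way we obtain a positive zero forcing process on $G$ whose forcing trees are $T_1'\stackplus{w}S = T_1$ and $T_2' = T_2$, which is exactly the claimed minimal collection; since $Z_+(G)=T(G)=2$ for double trees by Corollary~\ref{Z+vertexsum} (double trees being iterated vertex sums of trees with a double path), $\{T_1,T_2\}$ is indeed minimal.

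One technical point to handle carefully: when $v$ lies inside a pendant branch $S$, I must check that the positive colour-change rule genuinely lets the single black vertex $v$ force its way along $S$ up to $w$ in the graph $G$, not merely in $S$ in isolation. This is fine because at the start only $v$ is black, so the white components of $G\setminus\{v\}$ are honest induced subgraphs, and inside the component containing the rest of $S$ the vertex $v$ (or its successive descendants) has a unique white neighbour along the tree $S$ precisely because $S$ is a pendant branch meeting the rest of $G$ only at $w$; thus forcing proceeds down $S$ toward $w$, and once $w$ is black we are in the situation covered by the induction hypothesis applied to $G'$. A symmetric remark applies if $w \in T_2$ instead of $T_1$. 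Assembling the forcing trees for $G'$ with the path of forces inside $S$ (rooted appropriately so that $w$'s tree absorbs $V(S)$) yields the two induced trees $T_1,T_2$ as the forcing trees, completing the induction and hence the proof of Corollary~\ref{PZFS of double tree}.
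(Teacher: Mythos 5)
Your overall route---peel the double tree down to a double path by removing pendant branches (equivalently, build it up by vertex sums with trees), apply Corollary~\ref{PZFS of double path} to the double path, and propagate forces along each pendant branch---is exactly the argument the paper intends: the paper records the corollary only as a consequence of Corollary~\ref{PZFS of double path} together with the remark that every double tree arises from an ``appropriate'' double path by vertex sums with trees. Your inductive step is also handled correctly (forcing from $v$ down a pendant branch to its attachment vertex $w$, then continuing as in the smaller double tree; assembling the forces into the induced trees $T_1$ and $T_2$; minimality because a double tree is not a tree, so $Z_+(G)\geq 2$).

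However, the one concrete structural step you supply---take a \emph{longest} path $Q_i$ in each covering tree $T_i$ and let $D$ be the subgraph induced on $V(Q_1)\cup V(Q_2)$---is false, and it is precisely the step you yourself flag as the crux. Take $G$ to be the $4$-cycle on $c,a,v,b$ together with a pendant vertex $d$ adjacent to $c$, with $T_1$ the star with centre $c$ and leaves $a,b,d$, and $T_2=\{v\}$. Then $a\,c\,d$ is a longest path of $T_1$, yet the subgraph induced on $\{a,c,d,v\}$ is a path (so not a double path, contrary to your parenthetical claim that it cannot be a tree when $G$ is not), and the leftover vertex $b$ is adjacent to both $c\in Q_1$ and $v\in Q_2$, so it lies in no pendant subtree attached to $D$ at a single vertex; your induction cannot even start from this decomposition. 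What is needed is to choose $Q_i$ to be a path of $T_i$ containing \emph{every} vertex of $T_i$ that has a neighbour in the other tree. Such a path exists: if attachment vertices of $T_i$ lay in three different components of $T_i\setminus\{m\}$ for some vertex $m$, then contracting the other covering tree to a single vertex and each of the three branch segments to connected sets would exhibit a $K_{2,3}$ minor, contradicting outerplanarity. With that choice, every cross edge has both ends in $Q_1\cup Q_2$, so every cycle of $G$ lies in $D$ (hence $D$ is a genuine double path) and every vertex outside $D$ lies in a branch of $T_1$ or $T_2$ meeting the rest of $G$ in exactly one vertex; your induction then goes through. As written, though, the decomposition---the only place where outerplanarity enters beyond Corollary~\ref{PZFS of double path}---is both unproved and, in the specific form you give it, incorrect, so the argument has a genuine gap there.
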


%%%%%%%%%%%%%%%%%%%%%%%%%%%%%%
\section{Outerplanar graphs}
\label{outerplanar}

In \cite{barioli2011minimum} it is shown that the maximum positive
semi-definite nullity is equal to the tree cover number for any
outerplanar graph. Since the positive zero forcing number is an upper
bound on the maximum positive semi-definite nullity, this implies that $Z_+(G)
= T(G)$ holds for any outerplanar graph.  This result was shown to be
true in \cite{barioli2011minimum} and \cite{ekstrand2011note} (where
the proof is generalized to 2-trees), in this section we give a
different proof of this fact that does not rely on Schur-complements
or orthogonal removal.  Moreover, we show that any minimum tree
covering of an outerplanar graph coincides with a minimum collection
of zero forcing trees.

In a fixed tree covering of a graph, two trees, $T_1$ and $T_2$, are
said to be \textsl{adjacent} if there is at least one edge $uv\in
E(G)$ such that $v \in V(T_1)$ and $u \in V(T_2)$.  A tree is called
\textsl{pendant} if it is adjacent to only one other tree from this
given tree covering.

Throughout this section, $G$ will be an outerplanar graph with a
planar embedding in which all the vertices are on the same face.  An
edge of $G$ is called \textsl{outer} if it lies on the face containing
all of the vertices; it an edge is not outer, then it is called
\textsl{inner}.  Further, let $\TT(G)$ be a minimal tree covering for
$G$.  Define $H_{\TT}$ to be the graph whose vertices correspond to
the trees in $\TT(G)$ and two vertices in $H_{\TT}$ are adjacent if
there is an outer edge between the corresponding trees in the graph
$G$.  Two trees of $\TT(G)$ are called \textsl{consecutive}, if their
corresponding vertices in $H_{\TT}$ are adjacent vertices each of
degree two.

\begin{thm}\label{consecutive trees}
  Let $G$ be an outerplanar graph and $\TT(G)$ a minimum tree covering
  for $G$. If there is no pendant tree in $\TT(G)$, then there are at
  least one pair of consecutive trees in $\TT(G)$.
\end{thm}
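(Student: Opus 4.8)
The plan is to work with the auxiliary graph $H_{\TT}$ associated to the minimal tree covering $\TT(G)$, reducing the theorem to a statement about this graph together with the geometry of the outerplanar embedding. Since $\TT(G)$ has no pendant tree, every vertex of $H_{\TT}$ has degree at least two in $H_{\TT}$ (recall that the edges of $H_{\TT}$ record \emph{outer} edges of $G$ between distinct trees). I would first argue that $H_{\TT}$ is itself outerplanar: the cyclic order of the vertices of $G$ on the outer face induces a cyclic order on the trees of $\TT(G)$, and the outer edges of $G$ that go between distinct trees project to a planar, outer-face-respecting drawing of $H_{\TT}$. The conclusion to be proved — the existence of two \emph{consecutive} trees, i.e. two adjacent vertices of $H_{\TT}$ each of degree exactly $2$ — then becomes a purely combinatorial claim about outerplanar graphs with minimum degree $\ge 2$.

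Next I would establish the combinatorial core: \emph{an outerplanar graph with minimum degree at least $2$ that is not itself a single cycle must have two adjacent vertices of degree $2$} (and if it is a single cycle, every pair of adjacent vertices works). The standard route is Euler's formula for outerplanar graphs: a simple outerplanar graph on $N$ vertices has at most $2N-3$ edges, so the average degree is strictly less than $4$; combined with $\delta(H_{\TT}) \ge 2$, a counting/discharging argument shows there must be ``many'' vertices of degree $2$, and in fact one can show these degree-$2$ vertices cannot all be pairwise non-adjacent along the outer cycle unless the graph is a cycle. Concretely, I would look at the boundary walk of the outer face: between two consecutive degree-$2$ vertices on the outer boundary there must be a vertex of degree $\ge 3$ absorbing a chord, and charging chords against the higher-degree vertices forces, by the edge bound, that somewhere two degree-$2$ vertices sit side by side.

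The step I expect to be the main obstacle is verifying that $H_{\TT}$ is genuinely simple and outerplanar in the required sense — in particular that two trees cannot be joined by outer edges in a way that would create a ``crossing'' in $H_{\TT}$, and that minimality of $\TT(G)$ prevents degenerate configurations (for instance a tree adjacent to only one other tree via outer edges but to others only via inner edges, which would still count as pendant in $H_{\TT}$ and would need separate handling, or a situation where contracting two adjacent trees of total small size would yield a smaller covering). Here I would lean on outerplanarity of $G$: any outer edge between $T_i$ and $T_j$ must be part of the outer boundary, so the trees meeting the outer face do so in arcs, and two such arcs for distinct trees are either disjoint or nested-consistent, which is exactly what rules out crossings and forces $H_{\TT}$ to be outerplanar. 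Once that is in place, the degree bound $\delta \ge 2$ (from ``no pendant tree'') plus the edge count $|E(H_{\TT})| \le 2|V(H_{\TT})| - 3$ delivers a pair of consecutive trees, with the cycle case treated separately and trivially.
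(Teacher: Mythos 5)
There is a genuine gap: the ``combinatorial core'' your reduction relies on is false. You reduce the theorem to the claim that every simple outerplanar graph with minimum degree at least $2$ that is not a single cycle contains two adjacent vertices of degree $2$. A counterexample is the maximal outerplanar graph on six vertices with outer cycle $1,2,3,4,5,6$ and chords $\{1,3\},\{3,5\},\{5,1\}$: it has minimum degree $2$, is not a cycle, but its degree-$2$ vertices are $2,4,6$, which are pairwise non-adjacent. Note also that this graph meets the edge bound $|E|\le 2|V|-3$ with equality (average degree $3$), so no counting or discharging argument based only on outerplanarity and $\delta\ge 2$ can force an adjacent pair of degree-$2$ vertices. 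Since the only properties of $H_{\TT}$ you feed into the final step are simplicity, outerplanarity and minimum degree $\ge 2$, the argument cannot close. In addition, the very first step --- ``no pendant tree implies $\delta(H_{\TT})\ge 2$'' --- is unproven: pendancy of a tree is defined via \emph{all} edges of $G$ between trees, while $H_{\TT}$ records only \emph{outer} edges, so a non-pendant tree could a priori have degree $1$ in $H_{\TT}$ (you flag this yourself as an obstacle but do not resolve it).

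The paper's proof succeeds precisely because it retains structure that your reduction discards. It splits on whether some tree of $\TT(G)$ contains an inner edge of $G$ in its edge set. If no tree does, then each tree is an arc of the outer boundary, so $H_{\TT}$ is literally a cycle and \emph{every} adjacent pair of trees is consecutive. Otherwise, it picks an extremal (``left-most'') inner edge belonging to a tree, restricts to the induced outerplanar subgraph on the side of that edge containing no endpoint of such inner edges, observes that the trees meeting this subgraph form a minimum covering of it in which no tree uses an inner edge, and applies the first case there. The information you lose when passing to ``$H_{\TT}$ is outerplanar with $\delta\ge 2$'' is exactly this: in the relevant region the trees contain no inner edges, which is what forces the auxiliary graph to be a cycle rather than an arbitrary outerplanar graph such as the counterexample above. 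To repair your approach you would need to prove additional properties of $H_{\TT}$ (beyond outerplanarity and the degree bound) that exclude such configurations, and that is essentially the content of the paper's case analysis.
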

\begin{proof}
  Assume that $\TT(G)$ is a minimum tree covering for $G$ in which
  there is no pendant tree. Then, two cases are possible:

  {\bf Case 1.} There is no tree in $\TT(G)$ with at least one of the
  inner edges of $G$ in its edge set (see Figure~\ref{fig_for_H_T} for
  an example of such a graph). Therefore $H_{\TT}$ is a
  cycle. Accordingly, any adjacent pair of trees in $\TT(G)$ are
  consecutive.
\begin{figure}[ht!]
\vspace{.5cm}
\centering
\includegraphics[width=.6\textwidth]{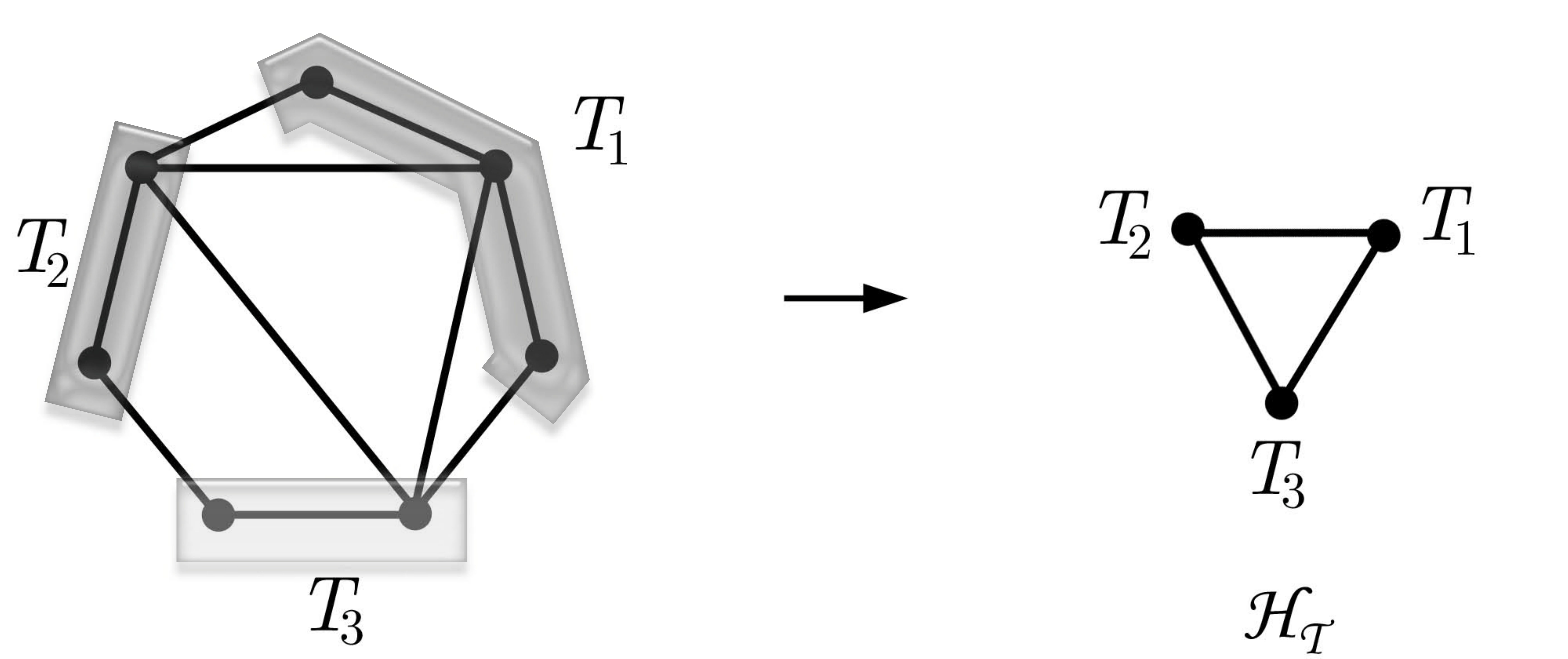}
\caption{Forcing trees with no inner edge in their edge set}
\label{fig_for_H_T}
\end{figure}

{\bf Case 2.}  There is at least one tree in $\TT(G)$ that has an
inner edge of $G$ in its edge set (Figure~\ref{fig_for_H_T_2} gives an
example of such a graph). 

\begin{figure}[ht!]
\vspace{.5cm}
\centering
\includegraphics[width=.8\textwidth]{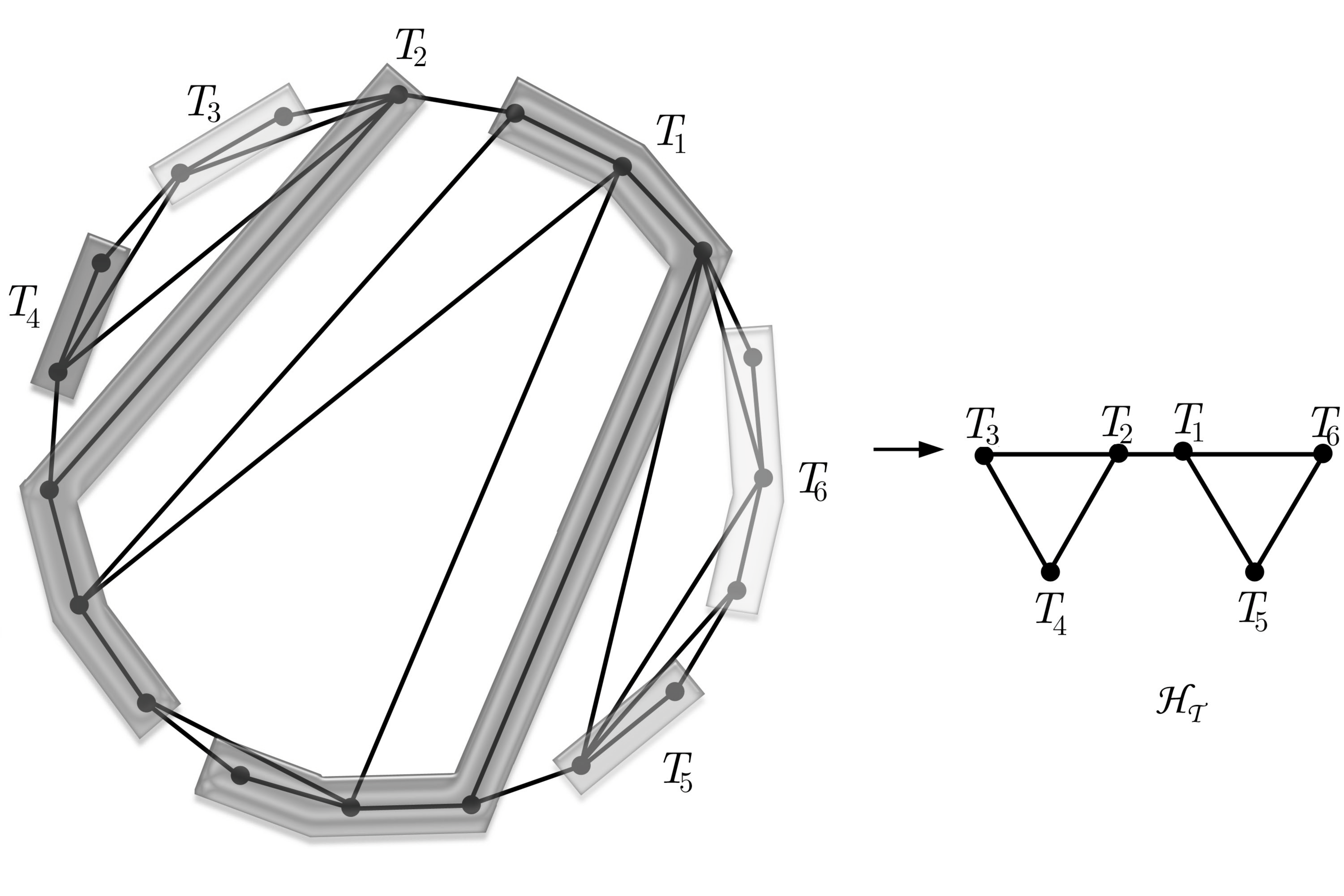}
\caption{$T_1$ is a forcing tree with an inner edge in its edge set}
\label{fig_for_H_T_2}
\end{figure}

The idea in this case is that we will select ``left-most'' such inner
edge of $G$. Then the subgraph induced by this edge, and all the
vertices to the left of the edge, form an outerplanar graph in which no
tree includes an inner edge of $G$. In case 1, we showed that such a
subgraph will have a consecutive pair of trees, and thus so will $G$.

First, let $W$ be the set of all vertices which are the end-points of
an inner edge of $G$ that is also included in the edge set of a tree
in $\TT(G)$. Second, note that any inner edge $e = \{u,v\}$ of $G$
partitions the plane into two parts and, consequently, partitions the
set of vertices of $G \backslash \{u,v\}$ into two subsets $V_e'$ and
$V_e''$. Since $G$ is outerplanar and finite, there exists an inner
edge, $e$ in $W$ such that at
least one of $V_e'$ or $V_e''$ does not contain any of the vertices in
$W$. We will assume that $V_e'$ is the vertex set that is disjoint
from $W$.

The subgraph of $G$ induced by the vertices $V_e' \cup \{u,v\}$ is an
outerplanar graph; call this $H$.  The trees in $\TT(G)$ that
intersect with $H$ form a minimal tree covering of $H$, which we will
call $\TT(H)$. None of the trees in $\TT(H)$ can include an inner edge
of $H$ (this follows as there are no vertices that are both in $H$ and
in $W$). By Case 1, there is a pair of adjacent trees in $\TT(H)$ and
therefore there is at least one pair of consecutive trees in $\TT(G)$.
\end{proof}

The following theorem shows that for any outerplanar graph, it is
possible to find a minimal tree covering that has a pendant tree; this
plays a key role in the proof of the fact that outerplanar graphs
satisfy $Z_+(G)=T(G)$.

\begin{thm}\label{pendant tree}
  Let $G$ be an outerplanar graph. Then there is a minimum tree
  covering for $G$ in which there is a pendant tree.
\end{thm}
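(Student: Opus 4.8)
The plan is to prove the contrapositive-flavored statement by contradiction: suppose every minimum tree covering of $G$ has no pendant tree, and derive an impossibility. So fix an outerplanar graph $G$ with its embedding, and suppose $\TT(G)$ is a minimum tree covering with no pendant tree; among all such coverings, choose one that is extremal in a way to be specified below (for instance, one that minimizes the total number of edges used by the trees, or that minimizes some potential measuring how ``spread out'' the trees are along the outer boundary). Since there is no pendant tree, Theorem~\ref{consecutive trees} applies and gives a pair of consecutive trees $T_i, T_j$ in $\TT(G)$, meaning their vertices in $H_{\TT}$ are adjacent and each has degree exactly two.

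The key step is then a local surgery on the consecutive pair. Because $T_i$ and $T_j$ are consecutive, each is joined by outer edges to exactly one other tree besides the other member of the pair; call these neighbours $T_i'$ and $T_j'$. Here I would use outerplanarity crucially: the union $T_i \cup T_j$ together with the connecting outer edge is an induced tree (no inner chords can cross into this region without violating the degree-two condition in $H_{\TT}$), so replacing $\{T_i, T_j\}$ by the single tree $T_i \cup T_j$ would give a covering with one fewer tree — contradicting minimality unless merging is blocked. Since $\TT(G)$ is minimum, merging must be blocked, which forces structural information: the only way $T_i\cup T_j$ fails to be an induced tree is the presence of an edge of $G$ with both endpoints in $V(T_i)\cup V(T_j)$ beyond those in the two trees plus the one connecting edge, i.e.\ a chord creating a cycle. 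The idea is to then re-route: absorb part of one tree into its outer neighbour $T_i'$ or $T_j'$, peeling the consecutive pair down until one of them becomes pendant, while keeping the number of trees fixed at $T(G)$. Repeating this on the resulting covering (which still has $T(G)$ trees) and inducting on the extremal quantity chosen above terminates at a covering possessing a pendant tree.

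I expect the main obstacle to be making the re-routing step rigorous: one must show that the surgery which ``pushes'' vertices from $T_j$ into a neighbouring tree can always be carried out so that (a) all trees remain induced, (b) they remain vertex-disjoint and covering, (c) the count stays $T(G)$, and (d) the chosen extremal quantity strictly decreases, guaranteeing termination at a covering with a pendant tree. The delicate case is when a chord of $G$ links the consecutive pair to a non-neighbouring tree in $\TT(G)$; handling this requires a careful appeal to the outerplanar embedding to see that such a chord cannot in fact exist given that the $H_{\TT}$-degrees are two, so that the only obstructions are the ``harmless'' ones internal to $V(T_i)\cup V(T_j)$. Once that is pinned down, the reduction is clean, and the theorem follows by induction on the number of trees (the base case, $T(G)=1$, being a single tree, which is trivially pendant, and $T(G)=2$ being immediate).
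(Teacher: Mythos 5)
There is a genuine gap here. Your argument gets exactly as far as the paper does by invoking Theorem~\ref{consecutive trees} (a minimum covering with no pendant tree has a consecutive pair), but the entire content of the theorem lies in the surgery on that pair, and your proposal leaves it unspecified: your conditions (a)--(d), the unnamed extremal quantity, and the termination argument are precisely what has to be proved, and you defer all of them. Moreover, the structural claim you lean on is false: $H_{\TT}$ is defined using only \emph{outer} edges, so the degree-two condition says nothing about inner edges of $G$ joining the two consecutive trees. Since the covering is minimum, merging $T_i\cup T_j$ into a single induced tree must be impossible, which forces at least two edges of $G$ between the pair, and one of these is an inner edge of $G$. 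That inner edge is not a ``harmless'' obstruction to be argued away --- it is the key object in the proof --- and your plan of absorbing vertices into the outer neighbours $T_i'$ or $T_j'$ and iterating is neither carried out nor clearly repairable as stated (you do not verify that the count stays at $T(G)$, that the modified trees remain induced and vertex-disjoint, or that any potential strictly decreases).

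For comparison, the paper's proof needs no iteration, no extremal choice, and never touches the neighbouring trees: it re-partitions $V(T_1)\cup V(T_2)$ in one step. Let $e=\{u,v\}$ with $u\in T_1$, $v\in T_2$ be an edge between the pair that is inner in $G$ (of the two outer edges of the subgraph induced by $V(T_1)\cup V(T_2)$ joining the two trees, one must be inner in $G$), and assume $v$ is the only neighbour of $u$ in $T_2$ (the symmetric case is analogous). The forest $T_1\setminus\{u\}$ has exactly one component, $S_1$, containing vertices adjacent to $T_2$; set
\[
S_2=\left(T_2 \,\stackplus{v}\,\{u,v\}\right)\stackplus{u}\,(T_1\setminus S_1).
\]
Replacing $\{T_1,T_2\}$ by $\{S_1,S_2\}$ yields a tree covering of the same size in which $S_1$ is pendant (adjacent only to $S_2$). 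The idea missing from your write-up is exactly this: split one tree of the consecutive pair at the endpoint of the inner edge and reattach the remainder to the other tree through that edge, which produces the pendant tree immediately rather than after an inductive peeling process.
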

\begin{proof}
  Assume that $\TT(G)$ is a minimum tree covering for $G$ in which
  there is no pendant tree. We use Theorem~\ref{consecutive trees} to
  construct a new tree covering $\TT'(G)$ of $\TT(G)$ with
  $|\TT'(G)|=|\TT(G)|$ in which there is a pendant tree.

  By Theorem~\ref{consecutive trees} there are two trees $T_1$ and
  $T_2$ in $\TT(G)$, which are consecutive. Let $H$ be the outerplanar
  graph induced by $V(T_1)\cup V(T_2)$. There are two outer edges in
  $H$ that have an end-point from each of trees $T_1$ and $T_2$
  (otherwise $T_1 \cup T_2$ would be a tree and $\TT(G)$ would not be
  a minimum tree covering). One of these outer edges of $H$, call it
  $e=\{u,v\}$, is an inner edge in $G$; we will assume that $u \in
  T_1$ and $v \in T_2$.

Similarly, if $u$ has any neighbour in $T_2$, then $v$ has no other
neighbours in $T_1$. Thus we will assume that $v$ is the only neighbour of $u$ in $T_2$.
 % If $v$ has any other neighbour in $T_1$, then $u$ has no other
  %neighbour in $T_2$ (otherwise there will be two crossing edges in
  %the planar embedding of $G$, or the edge $e$ would not be an outer
  %edge of the subgraph $H$). 
In fact, the subgraph $T_1 \backslash \{u\}$
  is a forest and exactly one of the trees in the forest has vertices
  which are adjacent to a vertex in $T_2$. Call this tree
  $S_1$. Define a second new tree by
\[
S_2 = \left( T_2 \stackplus{v} \{u,v\} \right) \stackplus{u} (T_1 \backslash S_1).
\]
By replacing $T_1$ and $T_2$ in $\TT(G)$ with $S_1$ and $S_2$ we can
construct a new minimum tree covering for $G$ in which $S_1$ is a
pendant tree.
 
  A similar argument applies when $u$ has another neighbour in $T_2$.
  If neither $u$ nor $v$ has any other neighbour in $T_1$ and $T_2$,
  respectively, then either case mentioned above are applicable.
\end{proof}

We now have all the necessary tools to prove the main result of this section.

\begin{thm}\label{outerplanars satisfy Z_+=T}
Let $G$ be an outerplanar graph. Then
\[
Z_+(G)=\T(G).
\]
Moreover, any minimal tree covering of the graph $\TT(G)$ coincides with a
collection of forcing trees with $|\TT(G)|=Z_+(G)$.
\end{thm}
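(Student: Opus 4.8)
The plan is to proceed by induction on the number of vertices of $G$, using Theorem~\ref{pendant tree} to extract a well-behaved pendant tree and then peeling it off. Concretely, I would fix a minimum tree covering $\TT(G)$ in which some tree $T_1$ is pendant (this exists by Theorem~\ref{pendant tree}), say with $T_1$ adjacent only to $T_2$ in $\TT(G)$. Let $H$ be the graph obtained from $G$ by deleting the vertices of $T_1$, or perhaps more delicately, the graph induced by all the trees of $\TT(G)$ other than $T_1$ (together with whichever vertices of $T_2$ are needed). The restriction $\TT(G)\setminus\{T_1\}$ is then a tree covering of $H$, and I would want to argue it is \emph{minimum}, so that $\T(H)=\T(G)-1$; for this one checks that a smaller covering of $H$ could be extended by $T_1$ to beat $\TT(G)$. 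By the induction hypothesis $Z_+(H)=\T(H)$ and, crucially, the trees of $\TT(G)\setminus\{T_1\}$ are realized as forcing trees in $H$ for some minimal positive zero forcing set $Z_p\subseteq V(H)$.

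The next step is to lift this forcing process from $H$ back to $G$. I would add one vertex of $T_1$ — a leaf of $T_1$ at maximal distance from the attaching region — to $Z_p$, obtaining a set $Z_p'$ of size $Z_+(H)+1=\T(G)$. One then argues that running the positive colour change rule on $Z_p'$ in $G$ first colours all of $H$ exactly as before (the presence of the black vertices of $T_1$ does not interfere, since $T_1$'s only edges to the rest of $G$ go into $T_2$, and in the positive rule we look at components of $G\setminus(\text{black set})$ separately), and then, once $T_2$ is fully black and a suitable vertex of $T_1$ adjacent to $T_2$ gets forced, the tree $T_1$ fills in from two ends along its own edges. This shows $Z_+(G)\le\T(G)$, and combined with Proposition~\ref{T(G)&Z_+(G)} gives equality, with $\TT(G)$ itself realized as the collection of forcing trees. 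The base case is a graph with one vertex, or more naturally a tree or a cycle, where the claim is already known.

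The main obstacle, I expect, is the bookkeeping around $T_2$: when we delete $T_1$ to form $H$, the pendant tree $T_1$ is attached to $T_2$ possibly through several edges, and in the subsequent forcing process on $G$ we need $T_2$ to be completely black \emph{before} any vertex of $T_1$ is used to force back into $T_2$ — otherwise the forcing trees get scrambled and $T_1$ might absorb vertices of $T_2$ or vice versa. Theorem~\ref{pendant tree} has been set up precisely to give a pendant tree $S_1$ whose attachment is controlled (in its proof $u$ has $v$ as its only neighbour in $T_2$), so the right statement to quote is that refined version: the pendant tree meets the rest of $G$ in a single edge, or at least in an edge set emanating from one vertex. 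With that normalization, the interference argument becomes: the white vertex of $T_1$ adjacent to the black region is the unique white neighbour in its component, so the chosen root-leaf of $T_1$ (black from the start) forces along $T_1$ independently, and the forcing trees come out exactly as $\TT(G)$. I would also need to note explicitly that adding extra black vertices can never \emph{prevent} a force that was available before, so the $H$-portion of the process is unaffected — this monotonicity of the positive colour change rule is the small lemma doing the real work.
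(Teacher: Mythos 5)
The skeleton of your plan (induction, extract a pendant tree via Theorem~\ref{pendant tree}, delete it, lift the forcing process) is the same as the paper's Case~1, but the step that does the real work is missing and the lemma you propose to fill it does not address the actual obstruction. Compared with $G'=G\setminus V(T_1)$, the graph $G$ contains extra \emph{white} vertices, namely those of $T_1\setminus\{y\}$; these can merge white components and give an already-black vertex of $T_2$ a second white neighbour in its component, thereby blocking a force that was available in $G'$. Your monotonicity observation (``adding extra black vertices can never prevent a force'') is about the wrong direction and does not repair this. Similarly, the recipe ``take $y$ to be a leaf of $T_1$ at maximal distance from the attaching region'' is unjustified: in a double path the partner of a given vertex $x$ of one path must be chosen in tandem with $x$ (the cut-set pairing of Lemma~\ref{lemma3}); an arbitrarily chosen far leaf of $T_1$ need not form a positive zero forcing set of the subgraph induced by $V(T_1)\cup V(T_2)$ together with $x$, and the attaching region may even be all of $T_1$, making your choice ill-defined. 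The paper resolves precisely this point with the double-tree machinery: the process is run as in $G'$ only until the first vertex $x$ of $T_2$ adjacent to $T_1$ is forced, and then Corollary~\ref{PZFS of double tree} supplies a vertex $y\in T_1$ \emph{depending on} $x$ such that $\{x,y\}$ forces the double tree $G[V(T_1)\cup V(T_2)]$ with $T_1$ and $T_2$ as forcing trees and with $T_2$ forced in the same order as in $G'$. You never invoke this, and the strengthening of Theorem~\ref{pendant tree} you hope to quote (pendant tree attached by a single edge, or by edges from one vertex) is not what that theorem provides. Note also that letting ``$T_1$ fill in from two ends'' would have vertices of $T_2$ forcing into $T_1$, which already ruins the claim that the forcing trees coincide with $\TT(G)$.

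The second genuine gap concerns the ``moreover'' clause, which quantifies over \emph{every} minimal tree covering. Your induction only certifies coverings that already contain a pendant tree: if the given $\TT(G)$ has none, applying Theorem~\ref{pendant tree} replaces the consecutive pair $T_1,T_2$ by new trees $S_1,S_2$, and what you then exhibit as forcing trees is the modified covering, not the original one. Showing that the original covering is also a collection of forcing trees is the entire second half of the paper's proof (Case~2, with the decompositions $T_1',T_1'',T_2',T_2''$ and a case analysis on whether the black vertex $x$ lies in $T_1$ or in $T_2$); your proposal omits this part altogether, so as written it proves at most $Z_+(G)=\T(G)$ together with the existence of \emph{some} minimal covering realized by forcing trees.
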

\begin{proof}
  We prove the claim by induction on the tree cover number and using
  Theorem~\ref{pendant tree}. It is obviously true for $\T(G)=1$. Assume
  that it is true for any outerplanar graph $G'$ with $\T(G') <
  k$. Now let $G$ be an outerplanar graph with $\T(G)=k$. By
  Proposition~\ref{T(G)&Z_+(G)}, we have $ Z_+(G) \geq
  \T(G)$. 

  Let $\TT(G) = \{T_1,T_2, \dots, T_k\}$ be a minimum tree covering of
  $G$.  We first consider the case when $\TT(G)$ contains a pendant tree.

 {\bf Case 1.} Assume that $T_1$ is a pendant
  tree and that $T_2$ is the only tree adjacent to $T_1$. Let $G'$ be
  the graph induced by the vertex set $V(G) \backslash V(T_1)$, then
  the induction hypothesis holds, so $T(G')=Z_+(G')=k-1$. Further,
  $T_2, T_3, \dots, T_{k}$ are forcing trees in a positive zero
  forcing process whose initial set of black vertices, $Z'_p$, has a
  vertex from each tree in $\TT(G)\backslash T_1$. 

  A positive zero forcing process in $G$ starting with the black vertices in
  $Z'_p$ can proceed as it does in $G'$ until the first vertex of
  $T_{2}$, say $x$, that is adjacent to some vertex in $T_1$ gets
  forced. Since the graph induced by $V(T_1)\cup V(T_2)$ is a double
  tree, according to Corollary \ref{PZFS of double tree}, the vertex
  $x$ determines a vertex $y$ in $T_1$ such that $\{x,y\}$ is a
  positive zero forcing set for the subgraph induced by $V(T_1)\cup
  V(T_2)$. Since the induction hypothesis holds for this subgraph, the
  tree $T_2$ is a forcing tree in this subgraph as well. Thus the
  vertices of $T_2$ get forced in the same order as they were forced in
  $G'$.

  Therefore we can complete the colouring of $G$ by adding the vertex
  $y$ to the initial set of black vertices.  Thus, $Z_p=Z'_p\cup
  \{y\}$ is a positive zero forcing set of $G$ with $T_1,T_2, \dots,
  T_k$ as the forcing trees in this positive zero forcing
  process. Thus, $Z_+(G)= \T(G)$.

  {\bf Case 2.} If $\TT(G)$ does not contain a pendant tree, then by
  Theorem~\ref{pendant tree}, it is possible to build a new tree
  covering that does have a pendant tree.  By Case 1, this new tree
  covering has exactly $Z_+(G)$ trees and the trees of this new tree
  covering are forcing trees. Now we need to show that the original
  minimal tree covering of the graph $G$ also coincides with a
  collection of forcing trees associated with a positive zero forcing
  set of $G$.

  We will assume that $T_1$ and $T_2$ are a pair of consecutive trees
  in $\TT(G)$ (from Theorem~\ref{consecutive trees} we know that such
  a pair exists). Using the same notation as in Theorem~\ref{pendant tree} we
  assume that $v\in T_2$ has a neighbour, other than $u$, in $T_1$
  (the other cases are similar).  In the procedure of constructing a
  pendant tree in the minimal tree covering of $G$, the pair of
  consecutive trees $T_1$ and $T_2$ were modified to obtain two new
  trees called $S_1$ and $S_2$.  The tree $S_1$ is pendant and only
  adjacent to the tree $S_2$ in the new minimal tree covering. Let
\[
\TT(G)' = (\TT(G) \backslash \{T_1, T_2\}) \cup \{S_1 , S_2\},
\]
and define 
\[
 T_1' = T_1 \backslash S_1, \qquad T_1'' = S_1.
\]
We will use a similar decomposition of $T_2$.  Let $T_2''$ be the set
of all the trees in the forest $T_2 \backslash \{v\}$ that have a
vertex which is adjacent to some vertex in $T_1''$. Define $T_2' = T_2
\backslash T_2''$.

%see Figure~\ref{pendant_construction}.
%
%\begin{figure}[ht!]
%\vspace{.5cm}
%\centering
%\includegraphics[width=.7\textwidth]{pendant_construction.pdf}
%\caption{A pendant tree}
%\label{pendant_construction}
%\end{figure}

Let $Z'_p$ be a positive zero forcing set for $G$ for which $\TT(G)'$ is a set
of zero forcing trees. Assume that $x \in V(S_2)$ and $y \in V(S_1)$
are the two vertices in $Z_p'$ from the trees $S_1$ and $S_2$ (from
Case 1 such the zero forcing set must have two such vertices).  We
will consider two cases, the first is when $x$ is a vertex in $T_2$
and second is when $x \in T_1$.

In the first case $x \in T_2$ and $y \in T_1$. We claim that $Z_p'$ is
a positive zero forcing set and there is positive zero forcing process in which the
trees of $\TT(G)$ are the zero forcing trees.  To see this we will
describe the positive zero forcing process. 

The positive zero forcing process proceeds along the forcing trees $S_1$ and
$S_2$ until the vertex $v$ is forced. Note that in the original
process, $v$ must force $u$. If the vertices $v$ and $y$ are removed
then one of the connected components will include all of $T_2''$ and
some of the vertices from $T_1''$. Starting with $v$ and $y$ it is possible to force all
the vertices in this component following the trees $T_2''$ and the
portion of $T_1''$ in the component. Once all the vertices of $T_2''$
are black, $y$ can force the remaining vertices along the tree
$T_1''$. Then the vertex in $T_1''$ that is adjacent to $u$ will force
$u$. Then $u$ can force the remaining vertices of $T_1'$.

In the second case both $x$ and $y$ are vertices in $T_1$. We claim that $(Z_p'
\backslash \{x\}) \cup \{v\}$ is a positive zero forcing set and there is positive zero
forcing process in which the trees of $\TT(G)$ are the zero forcing
trees.  Just as in the previous case, using $v$ and $y$ all the
vertices of $T_2''$ will be forced. Then, starting with $y$, all the
vertices for $T_1''$ will be forced with $T_1''$ the forcing tree.
Then the unique vertex in $T_1''$ adjacent to $u$ will force $u$ and
the positive zero forcing process will continue along $T_1'$. Finally, starting
with $v$, the vertices along $T_2'$ will be forced.
\end{proof}

\section{$k$-Trees}\label{k_trees}

A \textsl{$k$-tree} is constructed inductively by starting with a
$K_{k+1}$ and at each step a new vertex is added and this vertex is
adjacent to exactly $k$ vertices in an existing $K_{k}$. A
\textsl{partial $k$-tree} is any graph that is the subgraph of a
$k$-tree.  In particular, a graph is a partial $2$-tree if and only if
it does not have a $K_4$ minor (see \cite[p. 327]{Die}). Since outerplanar graphs are
exactly the graphs with no $K_4$ and $K_{2,3}$ minors (see \cite[p. 107]{Die}, it is easy to
see that every outerplanar graph is a partial $2$-tree.
In~\cite{ekstrand2011note} it is shown that the proof that the
maximum positive semi-definite nullity for outerplanar graphs is equal to the
tree cover number from~\cite{barioli2011minimum}, can be extended to
include any partial $2$-tree. From this it follows that if $G$ is a
partial $2$-tree, then $Z_+(G) = T(G)$.

In this section, we will give a purely graph theoretical version of
this result for a subset of $2$-trees. We also try to track the
variations between the positive zero forcing number and the tree cover
number in this subset of $k$-trees with $k>2$. This demonstrates that
$2$-trees are rather special when it comes to comparing $Z_+(G)$ and
$T(G)$. 

We will define a type $k$-tree that we call a \textsl{$k$-cluster}.  These
$k$-trees are constructed recursively starting with a $H=K_{k+1}$.  At
each step a new vertex is added to the graph and this new vertex is
adjacent to exactly $k$ of the vertices in $H$. In a general $k$-tree
the new vertices are adjacent to any $k$-clique in the graph, but in a
$k$-cluster the new vertices must be adjacent to a $k$-clique in $H$.
Observe that for each vertex $v$ not in $H$, there is exactly one
vertex in $H$ that is not adjacent to it. 

If $G$ is a $k$-cluster, then define $S(G)$ to be the set of all distinct
$k$-cliques $H' \subset H$ with the property that $H' \cup \{v\}$
forms a clique of size $k+1$ in $G$, for some $v\in V(G)\backslash
V(H)$.  The size of $S(G)$ can be no more than $k+1$.

\begin{thm}\label{Z_+&T of k-trees}
Suppose $G$ is a $k$-cluster and let $S(G)$ be as defined above. 
\begin{enumerate}
\item If $|S(G)|\geq 3$, then $Z_+(G)=k+1$.
\item If $|S(G)|<3$, then $Z_+(G)=k$.
\item If $|S(G)|=k+1$ and $k$ is  even, then $T(G)=\lceil \frac{k+1}{2} \rceil+1$.
\item If $|S(G)|<k+1$ and $k$ is even, then $T(G)=\lceil \frac{k+1}{2} \rceil$.
\end{enumerate}
\end{thm}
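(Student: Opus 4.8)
The plan is to prove the four statements by a combination of explicit constructions of small positive zero forcing sets (for the upper bounds on $Z_+$), lower bound arguments via the parameter $\Mp$ or direct forcing analysis (for the lower bounds on $Z_+$), and an extremal/counting argument comparing covering trees with the clique structure of $G$ (for the tree cover statements). Throughout, I will use the structural fact peculiar to $k$-clusters: all the "new" vertices $v \in V(G)\setminus V(H)$ hang off $k$-cliques contained in the single $K_{k+1}$ called $H$, and each such $v$ misses exactly one vertex of $H$. So the vertices outside $H$ are partitioned into at most $k+1$ groups according to which vertex of $H$ they miss, and $S(G)$ records exactly which of these groups are non-empty (equivalently, which $k$-cliques $H'\subset H$ actually get extended).

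For parts (1) and (2), the $Z_+$ statements: first I would establish the upper bound $Z_+(G)\le k+1$ always, and $Z_+(G)\le k$ when $|S(G)|<3$, by exhibiting explicit positive zero forcing sets. A natural candidate set is $V(H)$ minus one or two well-chosen vertices, together with, if needed, one vertex from an appropriate outside group; once all of $H$ is black, each outside vertex $v$ sits in its own component of $G\setminus B$ (or a component where it has a unique white neighbour), so the forcing completes easily since $v$ has $k$ black neighbours in $H$. The delicate point is choosing which vertices of $H$ to leave white so that the positive colour change rule can actually start: if $|S(G)|\le 2$ then at most two $k$-cliques of $H$ are "active", and leaving white a vertex missed by some active outside vertex lets that outside vertex be forced. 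For the matching lower bounds I would argue that no smaller set works: when $|S(G)|\ge 3$ there are three distinct $k$-cliques of $H$ being extended, and I will show (by a short case analysis on where a putative size-$k$ black set meets $H$ and the outside groups, using that $G[H]=K_{k+1}$ has $Z_+ = k$ and that each outside group attached to an active clique behaves like a "pendant" obstruction) that fewer than $k+1$ black vertices cannot force. For part (2) the bound $Z_+(G)\ge k$ is immediate since $G$ contains $K_{k+1}$ as a subgraph and one checks $Z_+(K_{k+1})=k$ monotonicity-style, or simply because $\Mp(G)\ge k$.

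For parts (3) and (4), the tree cover statements with $k$ even: here the key observation is that any induced tree in $G$ can contain at most two vertices of $H$ (three vertices of $H$ would form a triangle, which is not a tree) — more precisely an induced tree meets $V(H)$ in an independent set of $G[H]=K_{k+1}$, hence in at most one vertex, unless... wait, that forces at most one vertex of $H$ per induced tree. Let me instead use: an induced subtree of $G$ contains at most one vertex of $H$ together with possibly several outside vertices all missing a common vertex and pairwise non-adjacent. This immediately gives the lower bound: $H$ has $k+1$ vertices, each induced tree covers at most one of them, so $T(G)\ge ?$ — but that would give $T(G)\ge\lceil (k+1)/?\rceil$, so I must be more careful and allow an induced tree to use one vertex $h$ of $H$ plus outside vertices adjacent to $h$; then I count how those outside vertices (which pairwise miss one fixed vertex of $H$) can be bundled. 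The clean statement is: a maximum induced tree "rooted" at a vertex of $H$ or spanning two outside vertices covers at most two vertices of $H$ only if it uses zero vertices of $H$ and an outside vertex from two different groups that happen to be adjacent — the parity $k$ even is exactly what makes the pairing of the $k+1$ elements of $S(G)$ leave a leftover, producing the $+1$ in part (3). So for the upper bound I would explicitly construct $\lceil(k+1)/2\rceil$ (resp.\ $+1$) induced trees: pair up the active cliques/groups of $H$ two at a time into double-tree-like pieces (each piece being an induced tree through a matching edge of $H$ plus its attached outside vertices), handle the leftover vertex of $H$ when $k$ is even and $|S(G)|=k+1$ with an extra tree, and verify disjointness and inducedness.

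The main obstacle I expect is the lower bound in part (1), i.e.\ ruling out $Z_+(G)=k$ when $|S(G)|\ge 3$, and the matching careful bookkeeping in part (3). The subtlety is that the positive colour change rule is genuinely more permissive than the standard one (a black vertex can force into any component of $G\setminus B$), so a naive count of "how many white components need seeding" can undercount; I will need a clean invariant — probably tracking, throughout any positive forcing process, the number of white vertices of $H$ together with the number of "active but unseeded" outside groups, and showing this quantity cannot be driven to zero from a size-$k$ start when three cliques are active. For part (3), the care is that the bound $T(G)\ge\lceil(k+1)/2\rceil+1$ must genuinely use that $k$ is even and $|S(G)|=k+1$: with all $k+1$ groups non-empty, the outside vertices attached to complementary pairs of $H$-vertices cannot all be packed efficiently, and an honest argument needs to show that some induced tree in any optimal cover is "wasteful" (covers at most one useful vertex of $H$), forcing the extra tree; a clean way is to show the graph $H_{\TT}$-type auxiliary structure on the $k+1$ active cliques is an odd object that cannot be perfectly matched. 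I would present parts (2) and (4) first since they are the easier parities, then (1), then (3).
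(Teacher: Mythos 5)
Your overall strategy coincides with the paper's (explicit forcing sets for the $Z_+$ upper bounds, the ``at most two vertices of $H$ per induced tree'' count for the tree-cover parts), but as written the proposal leaves the two hardest steps unproved and contains some incorrect justifications. For the $Z_+$ lower bound $Z_+(G)\ge k$ you appeal to subgraph monotonicity of $Z_+$ or of $\Mp$; neither parameter is monotone under (induced) subgraphs --- a star $K_{1,n}$ has $Z_+=\Mp=1$ but contains the empty graph on $n$ vertices with $Z_+=\Mp=n$ --- so that step fails as stated (the paper instead uses the minimum-degree/tree-width bound, which is valid here since a $k$-cluster is a $k$-tree). More seriously, the entire content of part (1), namely that no set of size $k$ can force when $|S(G)|\ge 3$, is exactly what you defer (``the main obstacle I expect \dots I will need a clean invariant''), so part (1) is not actually proved. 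The paper's argument is short: choose outside vertices $u,v,w$ missing three distinct vertices of $H$; if the initial black set is a $k$-subset of $V(H)$ with white vertex $x\in V(H)$, then every black vertex of $H$ has, in the component of $x$, at least two white neighbours (namely $x$ and one of $u,v,w$ adjacent to both), so no force can start; and an initial black vertex outside $H$ either has two or more white neighbours (stalling everything) or forces its unique white neighbour in $H$, reducing to the previous situation.

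For the tree-cover parts your key fact is the right one, but the discussion wobbles (you briefly assert an induced tree meets $V(H)$ in an independent set, and you speak of adjacent outside vertices from different groups, although the outside vertices of a $k$-cluster form an independent set), and two genuine gaps remain. For the lower bound in (3), parity cannot be the mechanism: in (4) the parity of $k+1$ is the same, yet there is no $+1$. The real obstruction, which your ``odd object cannot be perfectly matched'' heuristic does not capture, is that any cover by $\lceil (k+1)/2\rceil$ trees must have exactly one tree meeting $H$ in a single vertex $w$, and when $|S(G)|=k+1$ the outside vertex adjacent to $V(H)\setminus\{w\}$ can be placed in no tree: it forms a triangle with the two $H$-vertices of every other tree, and it is adjacent neither to $w$ nor to any outside vertex. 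For the upper bound in (3), your construction (pair up $H$-vertices into double-star trees plus ``an extra tree'' for the leftover vertex $w$) strands the whole group of outside vertices missing $w$: they are adjacent to both $H$-vertices of every pair, so they cannot be appended to any paired tree, and they are pairwise non-adjacent, so a single extra tree through $w$ cannot absorb them. You need the paper's redistribution: extend the singleton $\{u\}$-tree of a cover of $H$ by all outside vertices adjacent to $u$, then split one pair $\{x,y\}$ into the singleton $\{x\}$ and a star at $y$ absorbing the group that misses $u$. Parts (2) and (4) as you outline them are essentially sound and match the paper, but (1) and (3) are not established by the proposal.
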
   
\begin{proof}
  Since the minimum degree of $G$ is $k$, it is clear that $Z_+(G)\geq
  k$. Suppose $H$ is the initial $K_{k+1}$ in the
  $k$-cluster. Obviously, the set $V(H)$ is a positive zero forcing
  set for $G$ and $Z_+(G) \leq k+1$.
 
  To prove the first statement suppose $B$ is a positive zero forcing
  set of $G$ with $|B|=k$. Since $|S(G)| \geq 3$, there are three
  vertices $u,v,w \in V(G)\backslash V(H)$ that are adjacent to
  distinct $k$-sets in $H$. Thus any vertex $x \in V(H)$ is adjacent
  to at least two of these vertices. So no positive zero forcing set can be
  contained in $V(H)$.  Thus there must be a vertex $z\in B$ such that
  $z\not\in H$. If $z$ has only one white neighbour, then this
  neighbour must be in $H$ and $z$ can force it and we return to the
  first case. If $z$ has two or more white neighbours, then no vertex
  of $B$ can perform a force and again we reach a contradiction.

  To verify the second statement observe that if $|S(G)|<3$, then there
  is at least one vertex $v \in V(H)$, that has at most one neighbour
  $u \in V(G)\backslash V(H)$. The set $V(H) \backslash \{v\}$ forms a
  positive zero forcing set. To see this, first note that after
  removing this set, all the vertices in $V(G) \backslash ( V(H) \cup
  \{u\})$ are disjoint. Finally, there is one vertex in $V(H)$ that is
  not adjacent to $u$, this vertex can force $v$.

  For the third statement assume $k$ is even and $|S(G)|=k+1$. Since
  $K_{k+1}$ is a subgraph of $G$, the tree cover number of $G$ is no
  less than $\lceil \frac{k+1}{2} \rceil$. Suppose that $\TT$ is a
  minimal tree covering for $G$ with $|\TT|=\lceil \frac{k+1}{2}
  \rceil$. Since no tree in $\TT$ can contain more than two vertices
  of $H$ and $T(G) = \lceil \frac{k+1}{2} \rceil$, each tree in
  $\TT$, except one, contains exactly two vertices of $H$. Assume that
  $T_1$ is the tree that contains only a single vertex of $H$.

  The size of $S(G)$ is $k+1$, so for any vertex, $w\in V(H)$, there
  is a corresponding vertex in $V(G)\backslash V(H)$ which is adjacent
  to all of the vertices of $H$ except $w$. In particular, if $w$ is
  the single vertex of $H$ in the tree $T_1$, then there is a vertex
  $u$ which is adjacent to all vertices in $H$ except $w$.  Since
  $N_G(u)=V(H)\backslash \{w\}$, the vertex $u$ can not be covered by
  extending any of the trees of $\TT$ (as $u$ is adjacent to both
  vertices in any tree from $\TT$). This contradicts $\TT$ being a
  tree covering for $G$. Thus
\[
T(G)\geq\left\lceil \frac{k+1}{2}\right\rceil+1.
\] 

Finally we will show that we can construct a tree covering of this
size.  Let $\TT'$ be a minimal tree covering for $H$. Thus
$|\TT'|=\lceil \frac{k+1}{2} \rceil$ and exactly one tree in $\TT'$
contains only one vertex (all other trees contain exactly two
vertices).  Call this tree $T=\{u\}$. Extend $T$ to include every
vertex in $V(G)\backslash V(H)$ that is adjacent to $u$. Now the only
vertices in $G$ that are not covered by a tree in $\TT'$ are the
vertices in $V(G)\backslash V(H)$ that are adjacent to every vertex in
$H$, except $u$, call these vertices $v_1,v_2,\dots, v_\ell$.  Take
any tree in $\TT'$, except $T =\{u\}$. Then this tree will have two vertices,
say $\{x,y\}$.  Remove this tree from $\TT'$ and replace it with the
two trees, $\{x\}$ and $\{ \{y,v_1\},\{y,v_1\}, \dots,\{y,v_\ell\}\}$. 
This gives a tree covering of size $\lceil \frac{k+1}{2} \rceil + 1$.

%%% cut here
%
%Finally we will show that we can construct a tree covering of this
%size.  Let $\TT'$ be a minimal tree covering for $H$. Thus
%$|\TT'|=\lceil \frac{k+1}{2} \rceil$. By considering the construction
%of $G$ for any $u\in V(G)\backslash V(H)$ except one, call it $v$,
%there is a tree $T$ in $\TT'$ that there exists an edge, say $xy$, such that
%$xu\in E(G)$ and $yu\not\in E(G)$. Hence we can extend $T$ to cover
%$u$. Thus $\TT'\cup \{v\}$ is a tree covering of size $\lceil
%\frac{k+1}{2} \rceil+1$ of $G$. Therefore $T(G)=\lceil \frac{k+1}{2}
%\rceil+1$.
%
%%  cut to here
%

For the fourth statement, assume $|S(G)|<k+1$ and $k$ is even. Thus
there is a vertex, $v \in V(H)$ that is adjacent to all of the
vertices in the graph $G$. Let $\TT'$ be a tree covering of $H$ with
$\lceil \frac{k+1}{2} \rceil$ trees in which $T=\{v\}$ is a covering tree.
Then $T$ can be extended to cover all the vertices of $V(G)\backslash
V(H)$. Thus $T(G)=|\TT'|=\lceil \frac{k+1}{2} \rceil$.
\end{proof}

Note that if in Theorem~\ref{Z_+&T of k-trees} we have $k=2$, then the
positive zero forcing number and the tree cover number coincide (this
result is also proved in \cite{ekstrand2011note}).
 
\begin{thm}\label{k-tree with k odd}
Let $G$ be a $k$-tree with $k$ is odd, then $T(G)=\frac {k+1}{2}$. 
\end{thm}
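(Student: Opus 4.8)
The plan is to establish the two inequalities $\T(G)\geq\frac{k+1}{2}$ and $\T(G)\leq\frac{k+1}{2}$ separately: the lower bound by a parity count against the clique $K_{k+1}$ sitting inside $G$, and the upper bound by induction on $|V(G)|$, peeling off a simplicial vertex of degree exactly $k$ (whose deletion leaves a smaller $k$-tree) and reattaching it as a pendant to a suitable tree of the inductive cover.

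For the lower bound, recall that every $k$-tree contains $K_{k+1}$ as a subgraph. In any tree covering of $G$, an induced tree can meet this $K_{k+1}$ in at most two vertices, since three vertices of the clique would induce a triangle, which an induced tree cannot contain. As $k$ is odd, $k+1$ is even, so covering the $k+1$ vertices of $K_{k+1}$ needs at least $\frac{k+1}{2}$ trees; hence $\T(G)\geq\frac{k+1}{2}$.

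For the upper bound I would induct on $n=|V(G)|$. When $n=k+1$ we have $G=K_{k+1}$, and a perfect matching (which exists since $k+1$ is even) is a tree covering of size $\frac{k+1}{2}$, each edge being an induced $K_2$. For $n>k+1$, take a simplicial vertex $v$; it has degree exactly $k$, its neighbourhood is a $k$-clique $C$, and $G'=G\backslash v$ is again a $k$-tree, so by induction $G'$ has a tree covering $\TT'$ with $|\TT'|=\frac{k+1}{2}$. The key observation is that $\TT'$ partitions the $k$ vertices of $C$, each tree getting at most two of them, so the parts sum to the odd number $k$; therefore some tree $T_0\in\TT'$ meets $C$ in exactly one vertex, say $c$. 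Since every neighbour of $v$ lies in $C$, the only neighbour of $v$ inside $T_0$ is $c$, so $T_0\cup\{v\}$ is an induced tree ($v$ becomes a pendant at $c$), and replacing $T_0$ by $T_0\cup\{v\}$ in $\TT'$ gives a tree covering of $G$ of size $\frac{k+1}{2}$.

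The main obstacle is precisely this step of the induction: guaranteeing that the cover $\TT'$ of $G'$ always contains a tree meeting $C$ in exactly one vertex, so that $v$ can be absorbed without increasing the count. This is where oddness of $k$ enters, through the parity of $|C|=k$; for $k$ even the argument fails, in line with parts (3) and (4) of Theorem~\ref{Z_+&T of k-trees}, where $\T(G)$ can strictly exceed $\lceil\frac{k+1}{2}\rceil$. The remaining points — that a simplicial vertex of a $k$-tree has degree exactly $k$ and its removal yields a $k$-tree, and that the modified family stays vertex-disjoint, induced, and covers $V(G)$ — are routine.
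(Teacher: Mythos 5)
Your proof is correct and follows essentially the same route as the paper: induction on $|V(G)|$, deleting a degree-$k$ simplicial vertex, and using the parity of $k$ to find a tree of the inductive cover meeting the neighbourhood clique in exactly one vertex, which can then absorb $v$. You are in fact slightly more careful than the paper, making the lower bound via the $K_{k+1}$ subclique explicit and using the general base case $K_{k+1}$ rather than just $K_4$.
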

\begin{proof}
  We prove this by induction on the number of vertices in $G$. The result is clearly
  true for $K_4$ (the smallest $3$-tree). Assume that the statement is
  true for all $G'$ with $|V(G')|<n$. Let $G$ be a $k$-tree with
  $|V(G)|=n$.  

  Let $v\in V(G)$ be a vertex of degree $k$ (such a vertex always
  exists).  By the induction hypothesis there exists a tree covering
  $\TT$ of $G\backslash{v}$ with exactly $\frac {k+1}{2}$ trees. Since
  neighbours of $v$ form a $k$-clique, any tree in $\TT$ covers at
  most two of the neighbours of $v$. Moreover, $v$ has an odd number
  of neighbours, thus there is exactly one tree $T$ in $\TT$ that
  covers only one of the neighbours of $v$. Therefore we can extend
  $T$ to cover $v$, and conclude $T(G)=\frac {k+1}{2}$.
\end{proof}

%The equality between positive zero forcing number and tree cover
%number of $2$-trees along with our approach in the proof of
%Theorem~\ref{Z_+&T of k-trees} and Theorem~\ref{k-tree with k odd}
%leads us to the following conjecture.
%\begin{conj}\label{conj_k_trees}
%  If a $k$-tree $G$, has $t$ clusters as subgraphs, each with
%  $|S(G)|\geq3$, such that each pair of them share at most $k-1$ cliques
%  of size $k+1$ then $Z_+(G)=k+t$.
%\end{conj}
Recall that a graph is called {\em chordal} if it contains no induced cycles on four or more
vertices. For instance, all $k$-trees are examples of chordal graphs. In general it is known
for any chordal graph $G$, that $\Mp(G) = |G| - cc(G)$, where $cc(G)$ denotes the fewest number of cliques
needed to cover (or to include) all the edges in $G$ (see \cite{MN}). This number, $cc(G)$, is often called the 
{\em clique cover number} of the graph $G$. Further inspection of the work in 
\cite{MN} actually reveals that, in fact, for any chordal graph, $cc(G)$ is equal to the {\em ordered set number}
($OS(G)$) of $G$. In \cite{MR2645093}, it was proved that for any graph $G$, the ordered set number of $G$ and
the positive zero forcing number of $G$ are related and satisfy, $Z_{+}(G)+ OS(G) = |G|$. As a consequence,
we have that $\Mp(G)=Z_{+}(G)$ for any chordal graph $G$, and, in particular, $Z_{+}(G) = |G|-cc(G)$. So 
studies of the positive zero forcing number of chordal graphs, including $k$-trees, boils down to determining the clique cover number and vice-versa. 
 
%%%%%%%%%%%%%%%%%%%%%%%%%%%%%%%%%%%%%%%%%%%%%%%%%

\section{Further work}\label{conclusions}

In Section~\ref{Graphs with Z(G)=P(G)} we introduced families of
graphs for which the zero forcing number and the path cover number
coincide. In fact, we showed that for the family of block-cycle graphs
this is true. However,  there are additional families for
which equality holds between these two parameters. For example,
the graph $G=K_4-e$, where $e$ is an edge of $K_4$, has
$Z(G)=P(G)$. It is, therefore, natural to propose characterizing
all the graphs $G$ for which $Z(G)=P(G)$. 

In~\cite{Fatemeh} it is conjectured the result analogous to
Corollary~\ref{Z+vertexsum} holds for zero forcing sets; if this
conjecture is confirmed, then there would be a much large family of
graphs for which the path number and zero forcing number coincide. To this 
end, we state the following problem as a beginning to this study.

\begin{conj}\label{conj_Z&P_vertex_sum}
Let $G$ and $H$ be two graphs, both with an identified vertex $v$, and both 
satisfy $Z(G)=P(G)$ and $Z(H)=P(H)$. Then
\[
 Z(G\,\,\stackplus{v}\,H)=P(G\,\stackplus{v}\,H).
\]
\end{conj}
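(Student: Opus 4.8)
\medskip
\noindent\textbf{A possible line of attack.}

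The plan is to reduce the conjecture to a \emph{local} refinement of the equality $Z=P$, in parallel with the way Corollary~\ref{Z+vertexsum} follows from Lemmas~\ref{Tofvertexsum} and~\ref{Z+ofvertexsum}. Since Proposition~\ref{P(G)<=Z(G)} already gives $P(G\,\stackplus{v}\,H)\le Z(G\,\stackplus{v}\,H)$, it suffices to prove the reverse inequality. For a graph $G$ and a vertex $v$, let $P^{v}(G)$ be the least size of a path covering of $G$ in which $v$ is an end-point of the path covering it; note $P(G)\le P^{v}(G)\le P(G)+1$. The first step is the purely combinatorial identity
\[
P(G\,\stackplus{v}\,H)=P^{v}(G)+P^{v}(H)-1 .
\]
For ``$\le$'' take minimum $v$-endpoint path coverings of $G$ and of $H$ and glue their two $v$-paths into one induced path of $G\,\stackplus{v}\,H$, which is legitimate because $v$ is a cut vertex and an end-point of both. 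For ``$\ge$'', given a minimum path covering of $G\,\stackplus{v}\,H$, let $P^{*}$ be the path covering $v$; since $v$ is a cut vertex, $P^{*}\cap V(G)$ and $P^{*}\cap V(H)$ are induced paths each having $v$ as an end-point, so restriction yields path coverings of $G$ and $H$ with $v$ at an end-point, and counting how $P^{*}$ splits gives the inequality.

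Next I would mirror this gluing on the forcing side. Call a zero forcing process \emph{initial at $v$} if $v$ lies in the chosen zero forcing set, and \emph{terminal at $v$} if in the chosen chronological list of forces $v$ performs no force; write $Z^{v}_{\mathrm I}(G)$ and $Z^{v}_{\mathrm T}(G)$ for the least size of a zero forcing set admitting such a process. In either case $v$ is an end-point of its maximal forcing chain, so $Z^{v}_{\mathrm I}(G),\,Z^{v}_{\mathrm T}(G)\ge P^{v}(G)$. If $B_{G}$ realises $Z^{v}_{\mathrm T}(G)$ and $B_{H}$ realises $Z^{v}_{\mathrm I}(H)$, then $B_{G}\cup(B_{H}\setminus\{v\})$ should be a zero forcing set of $G\,\stackplus{v}\,H$: running the $G$-process first is legitimate because no vertex of $G$ other than $v$ sees $H$ and $v$ never forces, so all of $V(G)$, including $v$, is blackened; afterwards $v$ is an available initial black vertex and the $H$-process runs verbatim. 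This gives
\[
Z(G\,\stackplus{v}\,H)\ \le\ Z^{v}_{\mathrm T}(G)+Z^{v}_{\mathrm I}(H)-1 ,
\]
and symmetrically with $G$ and $H$ interchanged.

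Combining the two displays, the conjecture would follow from the \emph{localised equality}: if $Z(G)=P(G)$ then $Z^{v}_{\mathrm I}(G)=Z^{v}_{\mathrm T}(G)=P^{v}(G)$ for every vertex $v$ --- that is, a minimum path covering with $v$ constrained to be an end-point can be realised as the family of maximal forcing chains of some zero forcing process with $v$ at the forcing end of its chain, and of another such process with $v$ at the forced end. I expect this to be the main obstacle. The proofs in Sections~\ref{Graphs with Z(G)=P(G)} and~\ref{doublepaths} that $Z=P$ for trees, block-cycle graphs, double paths and series of parallel paths all in fact exhibit the covering paths \emph{as} forcing chains, and each ought to be upgradable to control the orientation of the chain through a prescribed $v$ (for block-cycle graphs, e.g., by choosing the pendant block peeled off in the induction of Theorem~\ref{For block-cycle Z(G)=P(G)} relative to $v$). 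However, no characterisation of the graphs with $Z(G)=P(G)$ is known --- this is itself posed as an open problem in Section~\ref{conclusions} --- so establishing the localised equality in full generality, or exhibiting a counterexample that forces the conjecture to be restricted, is precisely the crux. A reasonable first target is thus to verify the localised equality, hence Conjecture~\ref{conj_Z&P_vertex_sum}, for all $G$ and $H$ drawn from the classes already handled here, where explicit minimal forcing processes are available.
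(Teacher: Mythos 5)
First, note that the statement you are trying to prove is stated in the paper only as Conjecture~\ref{conj_Z&P_vertex_sum}; the paper offers no proof, so your proposal can only be judged on its own merits, and as you yourself concede it is a programme rather than a proof (the ``localised equality'' $Z^{v}_{\mathrm I}(G)=Z^{v}_{\mathrm T}(G)=P^{v}(G)$ whenever $Z(G)=P(G)$ is left entirely open, and it is the heart of the matter). The forcing-side gluing bound $Z(G\,\stackplus{v}\,H)\le Z^{v}_{\mathrm T}(G)+Z^{v}_{\mathrm I}(H)-1$ is fine.

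There is, however, a concrete error earlier: the ``purely combinatorial identity'' $P(G\,\stackplus{v}\,H)=P^{v}(G)+P^{v}(H)-1$ is false. Your argument for ``$\ge$'' assumes that the path $P^{*}$ covering $v$ in a minimum covering of the sum straddles both summands, so that its restrictions have $v$ as an end-point; but $P^{*}$ may lie entirely inside one summand with $v$ as an \emph{interior} vertex, and then the restriction to that summand does not witness $P^{v}$. A counterexample: let $G$ be the path $a\mbox{--}v\mbox{--}b$ (so $Z(G)=P(G)=1$, $P^{v}(G)=2$) and let $H=K_{1,3}$ with centre $c$, leaves $x,y,v$ (so $Z(H)=P(H)=2$, $P^{v}(H)=2$). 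The vertex sum is a tree covered by the two induced paths $a\mbox{--}v\mbox{--}b$ and $x\mbox{--}c\mbox{--}y$, so $P(G\,\stackplus{v}\,H)=2$, while $P^{v}(G)+P^{v}(H)-1=3$. The correct statement is a minimum over three configurations of the path through $v$ (straddling, contained in $G$, contained in $H$), roughly $P(G\,\stackplus{v}\,H)=\min\{P^{v}(G)+P^{v}(H)-1,\ P(G)+P(H\backslash v),\ P(G\backslash v)+P(H)\}$, and with that correction your chain of inequalities no longer closes: in the example above the localised equality does hold ($Z^{v}_{\mathrm T}(G)=2$, $Z^{v}_{\mathrm I}(H)=2$), yet your bound only gives $Z(G\,\stackplus{v}\,H)\le 3>2=P(G\,\stackplus{v}\,H)$. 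So even granting the localised equality, the reduction must be reworked to handle the cases where the optimal covering keeps $v$ interior to a path on one side, which in turn requires controlling forcing numbers of $G\backslash v$ and $H\backslash v$ --- something the hypothesis $Z=P$ on $G$ and $H$ does not obviously provide.
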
 

It is not difficult to verify that in any tree, any minimal path cover
coincides with a collection of forcing chains. We conjecture that this
is also the case for the block-cycle graphs (and refer the reader to
\cite[Section 5.2]{Fatemeh} for more details). In general, it is an
interesting question if for a graph $G$ with $Z(G)=P(G)$, is it true
that any minimal path cover of $G$ coincides with a collection of
forcing chains of $G$?

In Section~\ref{forcing_trees}, we proved the equality $Z_{+}(G) =
T(G)$ where $G$ is an outerplanar graph. The structure of a planar
embedding of outerplanar graphs was the key point to establishing the
equality. There are many non-outerplanar graphs with a similar
structure; generalizing this structure will lead to discovering more
graphs that satisfy $Z_+(G)=T(G)$. In general, we are interested in
characterizing all the graphs $G$ for which $Z_{+}(G) = T(G)$.

%Finally, we are interested in determining deeper connections between the positive zero forcing number
%and the tree cover number of general $k$-trees based on
%clusters in the chordal graph.

%%%%%%%%%%%%%%%%%%%%%%%%%%%%%%%%%%%%%%%%%%%%%%
%\newpage 

\end{document}